\renewcommand{\mod}[1]{\allowbreak \if@display \mkern 8mu \else
\mkern 5mu\fi {\operator@font mod}\,\,#1}
\newcommand{\bc}{\mathbb C}
 \newcommand{\bq}{\mathbb Q}
\newcommand{\br}{\mathbb R}
\newcommand{\bz}{\mathbb Z}
\newcommand{\bp}{\mathbb P}
\newcommand{\aaa}{\mathbb A}
\DeclareMathOperator{\rk}{rk}
\newtheorem{theorem}{Theorem}
\newtheorem{proposition}[theorem]{Proposition}
\newcommand\SSS{\mathfrak S}
\newcommand\AAA{\mathfrak A}
\begin{document}
\title{Degenerations of K\"ahlerian K3 surfaces with finite
symplectic automorphism groups, II\footnote{With support by Russian
Sientific Fund N 14-50-00005.}}
\date{}
\author{Viacheslav V. Nikulin}
\maketitle

\begin{abstract}
We prove the main Conjecture 4 of our paper \cite{Nik9}.
Further, we apply these results to classification of
degenerations of codimension one of K\"ahlerian K3 surfaces with finite
symplectic automorphism groups.
\end{abstract}

\centerline{Dedicated to V.P. Platonov on the occasion of his 75th Birthday}

\section{Introduction}
\label{sec:introduction}

We prove the main Conjecture 4 of our paper \cite{Nik9}.
See Theorem \ref{maintheorem} below. Further, we apply results of
\cite{Nik9}, and Theorem \ref{maintheorem} and its proof to classification of
degenerations of codimension one of K\"ahlerian K3 surfaces with finite
symplectic automorphism groups. By classification, we understand an enumeration of
connected components of the corresponding moduli.
See Theorems \ref{th:transuniqe}, \ref{th:moduliweakST}, \ref{th:modulistrongST} below.

Our papers \cite{Nik-1} --- \cite{Nik9},
ideas by K. Hashimoto in \cite{Hash},
results by R. Miranda, D.R. Morrison \cite{MM1}, \cite{MM2},
D.G. James \cite{J} and other results are important to us.


\section{Types of degenerations of codimension 1 of\\
K\"ahlerian K3 surfaces with finite symplectic\\
automorphism groups, and the main classification
Theorem \ref{maintheorem} which was conjectured in\\
\cite[Conjecture 4]{Nik9}}
\label{sec:conj}

Let $X$ be a K\"ahlerian K3 surface (e. g. see \cite{Sh}, \cite{PS},
\cite{BR}, \cite{Siu}, \cite{Tod}
about such surfaces). That is $X$ is a non-singular compact
complex surface with the trivial canonical class $K_X$, and
its irregularity $q(X)$ is equal to 0. Then $H^2(X,\bz)$ with the intersection pairing
is an even unimodular lattice $L_{K3}$
of the signature $(3,19)$. The primitive sublattice
$S_X=H^2(X,\bz)\cap H^{1,1}(X)\subset H^2(X,\bz)$
is the {\it Picard lattice} of $X$ generated by first Chern
classes of all line bundles over $X$.

Let $G$ be a finite symplectic automorphism group of $X$. Here symplectic means
that for any $g\in G$, for a non-zero holomoprhic $2$-form $\omega_X\in
H^{2,0}(X)=\Omega^2[X]=\bc\omega_X$, one has $g^\ast(\omega_X)=\omega_X$.
For an $G$-invariant sublattice $M\subset H^2(X,\bz)$, we denote by
$M^G=\{x\in K\ |\ G(x)=x\}$ the {\it invariant sublattice of $M$,} and by
$M_G=(M^G)^\perp_M$ the {\it coinvariant sublattice of $M$.}
By \cite{Nik-1/2}, the coinvariant lattice $S_G=H^2(X,\bz)_G=(S_X)_G$ is
{\it Leech type lattice:} i. e.  it is negative definite,
it has no elements with square $(-2)$, $G$ acts trivially
on the discriminant group $A_{S_G}$, and $(S_G)^G=\{0\}$.
For a general pair $(X,G)$, the $S_G=S_X$, and non-general $(X,G)$
can be considered as K\"ahlerian K3 surfaces with the condition $S_G\subset S_X$
on the Picard lattice (in terminology of \cite{Nik-1/2}). The dimension of their
moduli is equal to $20-\rk S_G$.

Let $E\subset X$ be a non-singular irreducible rational curve (that is $E\cong \bp^1$).
It is equivalent to: $\alpha=cl(E)\in S_X$, $\alpha^2=-2$, $\alpha$ is effective
and $\alpha$ is numerically effective: $\alpha\cdot D\ge 0$ for every irreducible curve
$D$ on $X$ such that $cl(D)\not=\alpha$.

Let us consider the primitive sublattice $S=[S_G, G(\alpha)]_{pr}\subset S_X$
of $S_X$ generated by the coinvariant sublattice $S_G$ and all classes of the orbit $G(E)$.
We remind that primitive means that $S_X/S$ has not torsion.
Since $S_G$ has no elements with square $(-2)$, it follows that $\rk S=\rk S_G+1$
and $S=[S_G,\alpha]_{pr}\subset S_X$.

Let us {\it assume that the lattice $S=[S_G,\alpha]_{pr}$ is negative definite.} Then the
elements $G(\alpha)$ define the basis of the root system $\Delta(S)$ of
all elements with square $(-2)$ of $S$. All curves $G(E)$ of $X$
can be contracted to Du Val singularities of types of
connected components of the Dynkin diagram of the basis. The group $G$
will act on the corresponding singular K3 surface $\overline{X}$ with these Du Val
singularities. For a general such triplet $(X,G,G(E))$, the Picard lattice $S_X=S$,
and such triplets can be considered as {\it a degeneration of codimension $1$} of
K\"ahlerian K3 surfaces $(X,G)$ with the finite symplectic automorphism group $G$.
Really, the dimension of moduli of K\"ahlerian K3 surfaces with the condition $S\subset S_X$
on the Picard lattice is equal to $20-\rk S=20-\rk S_G-1$.

By Global Torelli Theorem for K3 surfaces \cite{PS}, \cite{BR}, the main invariants
of the degeneration is the {\it type of the abstract group $G$} which is equivalent
to the isomorphism class of the coninvariant lattice $S_G$,
and the type of the degeneration which is
equivalent to the Dynkin diagram of the basis $G(\alpha)$ or the Dynkin diagram
of the rational curves $G(E)$.

We can consider only the maximal finite symplectic automorphism group $G$ with the
same coinvariant lattice $S_G$, that is $G=Clos(G)$.
By Global Torelly Theorem for K3 surfaces, this
is equivalent to
$$
G|S_G=\{ g\in O(S_G)\ |\ g\ is\ identity\ on\ A_{S_G}=(S_G)^\ast/S_G \}.
$$
Indeed, $G$ and $Clos(G)$ have the same lattice $S_G$, the same orbits
$G(E)$ and $Clos(G)(E)$, and the same sublattice $S\subset S_X$.

In \cite{Nik9}, all types of $G=Clos(G)$ and types of degenerations (that is Dynkin
diagrams of the orbits $G(E)$) are described. They are described in Table 1 below
where $n$ gives types of possible $G=Clos(G)$, and we show all possible
types of degenerations at the corresponding rows.


Theorem 3 in \cite{Nik9} shows that for
a fixed type (defined by {\bf n})
of abstract finite symplectic group of automorphisms of K\"ahlerian K3 surfaces
and for a fixed type of degeneration
of codimension $1$ (type of Dynkin diagram), the
discriminant group $A_S=S^\ast/S$
of the corresponding lattice $S$ is always the same.
It is natural to suppose
that the more strong statement is valid: that the isomorphism class of
the lattice $S$ is defined uniquely.
But, exact calculations and considerations show that it is valid with
some few exceptions only (which are given in the Theorem \ref{maintheorem} below.

\begin{theorem}
%
Let $X$ be a K\"ahlerian
K3 surface with $S_X<0$. Let $G=Aut^0 X$ be the group of symplectic
automorphisms of $X$ (it is always finite) and $P(X)$ be a the set
of classes of non-singular rational curves on $X$. Let
$S_G=((S_X)^G)^\perp_{S_X}\subset S_X$ be the coinvariant sublattice. Assume that
$S_X$ is generated by $S_G$ and $P(X)$ up to finite index, and
$\rk S_X=\rk S_G+1$. Then the isomorphism class of the lattice $S=S_X$
is defined uniquely by the type of $G$ as an abstract group (equivalent to {\bf n})
and the type of the Dynkin diagram of $P(X)$. All possible types of $G$ (equivalent to
the invariant {\bf n}) and Dynkin
diagrams $P(X)$ are given in Table 1 below.

But, it is valid with the following and only the following two exceptions:

(1) {\bf n=34} (equivalently, $G\cong {\mathfrak S}_4$) and the degeneration of
the type $6\aaa_1$;

(2) {\bf n=10} (equivalently, $H\cong {D_8}$) and the degeneration of the type $2\aaa_1$.

In both these cases there are exactly two isomorphism classes of the
lattices $S$. They (and their genuses) are given in Table 1 below as degenerations
$(6\aaa_1)_I$, $(6\aaa_1)_{II}$ for {\bf n=34}, and $(2\aaa_1)_I$, $(2\aaa_1)_{II}$
for {\bf n=10}.
\label{maintheorem}
\end{theorem}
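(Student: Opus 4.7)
The plan is to reduce the statement to a counting problem for isomorphism classes in a single genus of lattices. By Theorem 3 of \cite{Nik9}, the discriminant form $q_S$ is determined by the invariant $\mathbf{n}$ and the Dynkin diagram type of $P(X)$; combined with the signature $(0,\rk S_G+1)$, this fixes the genus of $S$ entirely. The theorem then amounts to the claim that each such genus has exactly one class, except in the two exceptional rows where it has exactly two.

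For each row of Table 1, I would first realize $S$ explicitly as a primitive extension of $T := S_G \oplus \langle \alpha \rangle$ with $\langle \alpha \rangle \cong \langle -2 \rangle$. The lattice $S$ corresponds to an isotropic subgroup $H \subset q_T = q_{S_G} \oplus q_{\langle -2 \rangle}$, and since $G$ acts trivially on $A_{S_G}$ and preserves $S$, the admissible subgroups $H$ are pinned down up to the natural action of $O(S_G) \times O(\langle \alpha \rangle)$. This construction exhibits at least one lattice in each target genus in a uniform way and is the mechanism behind Theorem 3 of \cite{Nik9}.

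Next, for each genus I would invoke the machinery of Miranda--Morrison \cite{MM1}, \cite{MM2}, together with the results of James \cite{J} where necessary, to compute the number of isomorphism classes. The class number of the genus is controlled by a local obstruction group computable directly from $q_S$ and the localizations $S_p$. In almost every row of Table 1 this obstruction vanishes, or $S$ can be identified with a lattice already treated in \cite{Hash} and \cite{Nik-1/2}--\cite{Nik9}, which forces uniqueness.

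The main obstacle is the residual case analysis for the small-rank, small-discriminant situations where the local obstruction is non-trivial, namely the two exceptional rows $(\mathbf{n}=34,\ 6\aaa_1)$ and $(\mathbf{n}=10,\ 2\aaa_1)$. In these rows I would construct two non-isomorphic extensions $S_I$ and $S_{II}$ from distinct admissible isotropic subgroups $H$ (equivalently, from distinct $O(S_G)\times O(\langle\alpha\rangle)$-orbits of admissible gluings), verify non-isomorphism via a discrete invariant such as the isomorphism type of a specific root sublattice or a count of vectors of norm $-2$ or $-4$, and then close the enumeration by matching the Miranda--Morrison class number count of $2$ against these two candidates. The genus data exhibited in Table 1 then certify the final classification.
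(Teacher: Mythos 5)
There is a genuine gap at the heart of your plan: the lattices $S$ of Theorem \ref{maintheorem} are \emph{negative definite} (of rank $9$ to $19$), and the Miranda--Morrison/James machinery you propose to use in the decisive step computes class numbers and embedding numbers only for \emph{indefinite} lattices --- it rests on strong approximation for the spin group, i.e.\ on the coincidence of spinor genus and class, which fails for definite forms. There is no local obstruction group computable from $q_S$ and the $S_p$ that controls the class number of a definite genus of this size, so your step ``in almost every row the obstruction vanishes, which forces uniqueness'' cannot be carried out. (The paper itself invokes Miranda--Morrison and James only in Sections 5--7, for the transcendental lattices $T$ with $\rk T\ge 4$, which are indefinite; for the definite rank-$3$ lattices $T$ it has to fall back on tables of reduced definite forms.) Relatedly, your opening reduction overshoots: the theorem does not assert that each genus has exactly one class (or two); it asserts that the lattices $S$ \emph{actually arising} from degenerations are unique up to isomorphism. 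The paper never determines the class number of the genus.

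The paper's proof avoids class numbers entirely. Every candidate $S$ arises as a primitive sublattice of a Niemeier lattice $N_j$ with $P(S)\subset P(N_j)$, and all such markings are exhaustively enumerated in Table 2. For each case $({\bf n},R)$ one column $\ast$, with lattice ${\bf S}$, is chosen so that its orthogonal complement ${\bf S}^\perp_{N_j}$ is distinguished from the complements in all other columns by its root system or by its number of vectors of square $-4$. Since all columns give the same genus, Proposition \ref{prop:primembunim} re-embeds any candidate $S$ primitively into a Niemeier lattice with orthogonal complement isomorphic to ${\bf S}^\perp_{N_j}$; after adjusting by the Weyl group this embedding is again one of the enumerated columns, necessarily the $\ast$ column, whence $S\cong{\bf S}$. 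The exceptional pairs of classes for $({\bf n}=10,2\aaa_1)$ and $({\bf n}=34,6\aaa_1)$ come out of the same enumeration, distinguished by the two genera that occur. Your step 2 survives only in the residual cases $({\bf n},\aaa_1)$, where the paper indeed argues via $S_G\oplus\langle -2\rangle\subset S$ and a comparison of discriminant-group orders to get $S=S_G\oplus\langle -2\rangle$; note, however, that for degenerations $k\aaa_1$ with $k>1$ the class $\alpha$ is not orthogonal to $S_G$, so $S$ is not an overlattice of $S_G\oplus\langle -2\rangle$ as you wrote, but of $S_G$ plus an invariant rank-one lattice of larger discriminant.
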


This result was conjectured in \cite[Conjecture 4]{Nik9}.
Below, we give a proof.

%

\section{Discriminant forms technique according to \cite{Nik1}}
\label{sec:discformtech}
We use notations, definitions and results of
\cite{Nik1} about lattices
(that is non-degenerate integral symmetric bilinear
forms). Below, we remind the main definitions and results of
\cite{Nik1} which we shall use in this paper.

Let $S$ be a lattice, that is a free $\bz$-module of a finite rank
with non-degenerate symmetric bilinear form $x\cdot y\in \bz$ for $x,y\in S$.
We denote $x^2=x\cdot x$ for $x\in S$. A lattice $S$ is even if $x^2$ is
even for any $x\in S$.  For $0\not= \lambda \in \bq$, we denote by $S(\lambda)$
the lattice with symmetric bilinear form $\lambda\, x\cdot y$ for $x,y\in S$
if it is integral.

By $\oplus$, we denote the orthogonal sum of lattices,
quadratic forms. For $k\in \bz$ and $k\ge 0$,
we denote by $kS$ the orthogonal sum
of $k$ copies of a lattice $S$ (in \cite{Nik1}, we denoted
the same lattice as $S^k$). We use similar notations for finite
quadratic, symmetric bilinear forms.

For a prime $p$, we denote by $\bz_p$ the ring of $p$-adic integers,
and by $\bq_p$ the field of $p$-adic numbers.

Let $S$ be a lattice. Then we have the canonical embedding
$S\subset S^\ast=Hom(S,\bz)$. It defines the (finite)
discriminant group $A_S=S^\ast/S$ of $S$. By continuing
the symmetric bilinear form of $S$ to $S^\ast$, we obtain
the non-degenerate finite symmetric bilinear form $b_S$ on $A_S$ with values
in $\bq/\bz$ and the finite quadratic form $q_S$ on $A_S$
with values in $\bq/2\bz$ if $S$ is even. They are called
{\it discriminant forms of $S$.}

We denote by $l(A)$ the minimal
number
of generators of a finite Abelian group $A$, and by $|A|$
its order.
For a prime $p$, we denote by ${q_{S}}_p=q_{S\otimes \bz_p}$ the
$p$-component of $q_S$ (equivalently,
the discriminant quadratic form of the $p$-adic lattice
$S\otimes \bz_p$).

A $p$-adic lattice $K({q_{S}}_p)$
or the rank $l({A_S}_p)$ with the discriminant quadratic form
${q_{S}}_p$ is denoted by $K({q_{S}}_p)$. It is unique, up to
isomorphisms,
for $p\not=2$, and for $p=2$, if
${q_{S}}_2\not\cong q_\theta^{(2)}(2)\oplus
q^\prime$ where $q_\theta^{(2)}(2)$ denotes a finite quadratic form on
a group of order $2$ (see notations below).
For $p=2$ and ${q_{S}}_2\cong q_\theta^{(2)}(2)\oplus
q^\prime$, there are exactly two lattices $K({q_{S}}_2)$; their
determinants are different by $5\mod (\bz_2^\ast)^2$.
See \cite[Theorem 1.9.1]{Nik1}.

By $<A>$ we denote a lattice defined by a matrix $A$. In particular,
$K_\theta^{(p)}(p^k)=\left< \theta p^k\right> $, $\theta\in \bz_p^\ast$,
and
$U^{(2)}(2^k)=\left<
\begin{array}{cc}
0 & 2^k\\
2^k & 0
\end{array}
\right>
$,\
$
V^{(2)}(2^k)=\left<
\begin{array}{cc}
2^{k+1} & 2^k\\
2^k & 2^{k+1}
\end{array}
\right>
$
are standard $p$-adic lattices of the rank $1$ and $2$.
By Jordan decomposition (e.g. see \cite{CS}), any $p$-adic lattice is
their orthogonal sum. By $q_\theta^{(p)}(p^k)$ and $b_\theta^{(p)}(p^k)$
we denote discriminant quadratic and bilinear forms of $K_\theta^{(p)}(p^k)$ respectively.
By $u_+(2^k)$, $u_-(2^k)$ and $v_+(2^k)$, $v_-(2^k)$ we denote discriminant quadratic,
bilinear forms of $U^{(2)}(2^k)$ and $V^{(2)}(2^k)$ respectively.

One has relations between these forms from \cite[Prop. 1.8.2]{Nik1}:
\begin{equation}
Relations\ (a) - (k):
\label{relations:a-k}
\end{equation}

(a) $2K_\theta^{(p)}(p^k)\cong 2K_{\theta^\prime}^{(p)}(p^k)$ if $p\not=2$;

(b) $2U^{(2)}(2^k)\cong 2V^{(2)}(2^k)$;

(c) $K_\theta^{(2)}(2^k)\oplus K_{\theta^\prime}^{(2)}(2^k)\cong
K_{5\theta}^{(2)}(2^k)\oplus K_{5\theta^\prime}^{(2)}(2^k)$;

(d) $2K_\theta^{(2)}(2^k)\oplus K_{\theta^\prime}^{(2)}(2^k)\cong
\left\{
\begin{array}{ll}
V^{(2)}(2^k)\oplus K_{-5\theta^\prime}^{(2)}(2^k) & if\ \theta^\prime \equiv \theta (\mod 4),\\
U^{(2)}(2^k)\oplus K_{-\theta^\prime}^{(2)}(2^k) & if\ \theta^\prime \equiv -\theta (\mod 4)
\end{array}\right.
$;

(e) $V^{(2)}(2^k)\oplus K_\theta^{(2)}(2^{k+1})\cong U^{(2)}(2^k)\oplus K_{5\theta}^{(2)}(2^{k+1})$;

(f) $K_\theta^{(2)}(2^k)\oplus V^{(2)}(2^{k+1})\cong K_{5\theta}^{(2)}(2^k)\oplus U^{(2)}(2^{k+1})$;

(g) $K_\theta^{(2)}(2^k)\oplus K_{\theta^\prime}^{(2)}(2^{k+1})\cong
K_{\theta+2\theta^\prime}^{(2)}(2^k)\oplus K_{5(\theta^\prime-2\theta)}^{(2)}(2^{k+1})$;

(h) $K_\theta^{(2)}(2^k)\oplus K_{\theta^\prime}^{(2)}(2^{k+2})\cong
K_{5\theta}^{(2)}(2^k)\oplus K_{5\theta^\prime}^{(2)}(2^{k+2})$;

(i) $q_\theta^{(2)}(2)\cong  q_{5\theta}^{(2)}(2)$;

(j) $b_\theta^{(2)}(2)\cong b_{\theta^\prime}^{(2)}(2)$,\ $u_-^{(2)}(2)\cong v_-^{(2)}(2)$,
$b_\theta^{(2)}(4)\cong b_{5\theta}^{(2)}(4)$;

(k) relations between finite quadratic and bilinear forms which follow from (a) --- (h)
if one considers discriminant forms.

By \cite{Nik1}, the numbers $(t_{(+)}, t_{(-)})$ of positive, negative
squares of $S\otimes \br$ and the
discriminant quadratic form $q_S$ define the {\it genus}
of an even lattice $S$, that is isomorphism classes of
$S\otimes \br$ and $S\otimes \bz_p$ for all prime $p$.

If $S$ is an even lattice, the signature  $sign\ S=t_{(+)}-t_{(-)}$ of $S$ modulo $8$ that is $t_{(+)}-t_{(-)}\mod 8\equiv sign\ q_S \mod 8$
is the invariant of $q_S$. In particular, $sign\ S\equiv 0\mod 8$ if $S$
is unimodular and even. We have (e. g. see \cite[Prop. 1.11.2]{Nik1})

\begin{equation}
sign\ q_\theta^{(p)}(p^k)\equiv k^2 (1-p)+4k\eta \mod 8,\ where\ p\not=2,\ and\
(-1)^\eta=\left(\frac{\ \theta\ }{\ p\ }\right);
\label{sign8,1}
\end{equation}
\begin{equation}
sign\ q_\theta^{(2)}(2^k)\equiv \theta+4\omega(\theta)k\mod\ 8,\ where\
\omega(\theta)\equiv\frac{\theta^2-1}{8}\mod 2;
\label{sign8,2}
\end{equation}
\begin{equation}
sign\ u_+^{(2)}(2^k)\equiv 0\mod 8,\ \
sign\ v_+^{(2)}(2^k)\equiv 4k\mod 8.
\label{sign8,3}
\end{equation}
In particular,
\begin{equation}
sign\ q_\theta^{(2)}(4)\equiv \theta\mod 8,\ \
sign\ u_+^{(2)}(4)\equiv sign\ v_+^{(2)}(4)\equiv 0\mod 8.
\label{sign8,4}
\end{equation}

To find the genus of a lattice $S$, we consider the Jordan decomposition
\begin{equation}
S^{(p)}=S\otimes \bz_p=\bigoplus_{k\ge 0}{S^{(p)}_k(p^k)}
\label{Jordanlat}
\end{equation}
where $S^{(p)}_k$ are unimodular $p$-adic lattices. Then the
$p$-component ${q_S}_p$ is orthogonal sum of discriminant
quadratic forms of $S^{(p)}_k(p^k)$
for $k\ge 1$ and non-zero $S^{(p)}_k$.

For $p\not=2$, by relations (a) --- (k) above,
\begin{equation}
S^{(p)}_k=(m_k-1)K_1^{(p)}(1)\oplus K_{\theta_k}^{(p)}(1)
\label{inv,p}
\end{equation}
where $m_k=\rk S^{(p)}_k$ (or $m_k$ is the size of $S^{(p)}_k$),
and $\theta_k=\det(S^{(p)}_k)\in \bz_p^\ast/(\bz_p^\ast)^2$.
Thus, $S^{(p)}_k$ is defined by invariants $m_k=\rk S^{(p)}_k$ and
the Kronecker symbol $\left(\frac{\theta_k}{p}\right)$
where $\theta_k=\det S^{(p)}_k$.
Then $q_{S^{(p)}_k(p^k)}=(m_k-1)q_1^{(p)}(p^k)\oplus q_{\theta_k}^{(p)}(p^k)$.
Like in \cite{CS}, we shortly denote this form by the symbol $(p^k)^{{\pm}m_k}$
where $\pm 1=\left(\frac{\theta_k}{p}\right)$.

For $p=2$, the lattice $S^{(2)}_k$ has the type II if it is orthogonal sum of
lattices $U^{(2)}(1)$ and $V^{(2)}(1)$, and it has the type I otherwise.
Up to isomorphisms, $S^{(2)}_k$ is defined by the type, size=$\rk S^{(2)}_k$,
$\det=\det\ S^{(2)}_k\in \bz_2^\ast/(\pm )(\bz_2^\ast)^2=\{1,5\}\mod (\pm)(\bz_2^\ast)^2$,
and $sign\mod 8\equiv sign\ q_{S^{(2)}_k(4)}\mod 8$. See (\ref{sign8,4}) about
calculations of $sign\mod 8$. Shortly, like in \cite{CS}, $S^{(2)}_k(2^k)$
is denoted by $(2^k)_{II}^{(-1)^\delta\cdot s}$ for $S^{(2)}_k$
of the type $II$, size=$s$,
$\det=5^\delta \mod (\pm)(\bz_2^\ast)^2$, and by $(2^k)_{sign\mod8}^{(-1)^\delta\cdot s}$
for $S^{(2)}_k$ of
type $I$, size=$s$, $\det=5^\delta \mod (pm)(\bz_2^\ast)^2$ and with $sign\mod 8$.
For the Jordan decomposition  (\ref{Jordanlat}), these components are separated by comme\ ,\
instead of $\oplus$. The same notations are used for the corresponding discriminant quadratic forms.

In Sect. \ref{sec:Appendix}, we give Program 8 which uses these invariants to calculate
the genus of a lattice given by an integral symmetric matrix $l$.

The following fact from \cite{Nik1}  will be very important for us.

\begin{proposition} (Proposition 1.6.1 from \cite{Nik1})
Primitive embeddings of an even lattice $M$  into
unimodular even lattices such that the orthogonal complement to $M$ is isomorphic to $K$
are defined by isomorphisms $\gamma:A_M\cong A_K$ such that $q_K\circ\gamma=-q_M$.
Two such isomorphims $\gamma$, $\gamma^\prime$ define isomorphic primitive embeddings if and
only if they are conjugate by an automorphism of $K$, and they define isomorphic primitive
sublattices if and only if $\gamma\circ \overline{\phi}=\overline{\psi}\circ \gamma^\prime$ for
some $\phi\in O(M)$, $\psi\in O(K)$.
\label{prop:primembunim}
\end{proposition}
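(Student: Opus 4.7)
The plan is to treat the proposition as an overlattice classification problem. Given any primitive embedding $M\hookrightarrow L$ into an even unimodular lattice with orthogonal complement $K$, one has the chain
\begin{equation*}
M\oplus K\ \subset\ L\ =\ L^\ast\ \subset\ (M\oplus K)^\ast\ =\ M^\ast\oplus K^\ast,
\end{equation*}
so $H:=L/(M\oplus K)$ is a subgroup of $A_M\oplus A_K$. First I would show that $H$ is the graph of an isomorphism $\gamma:A_M\to A_K$. Primitivity of $M$ in $L$ means that if $(m^\ast,0)\in L$ then $m^\ast\in M$, i.e.\ $(m^\ast,0)\in M\oplus K$; so the projection $H\to A_M$ is injective. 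The same primitivity of $K$ (which follows automatically, since $K$ is an orthogonal complement) gives injectivity of $H\to A_K$. An index count using $[L^\ast:L]=1$ gives $|H|^2=|A_M|\cdot|A_K|$, which forces both projections to be onto. Thus $H$ is the graph of a bijection $\gamma:A_M\to A_K$, and evenness of $L$ together with $L\subset L^\ast$ forces, for every $(m^\ast,k^\ast)\in L$, the identity $(m^\ast)^2+(k^\ast)^2\in 2\bz$, i.e.\ $q_M(x)+q_K(\gamma(x))\equiv 0\pmod{2\bz}$ for $x\in A_M$; hence $q_K\circ\gamma=-q_M$.

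For the converse, given $\gamma:A_M\iso A_K$ with $q_K\circ\gamma=-q_M$, I would define
\begin{equation*}
L_\gamma=\{(m^\ast,k^\ast)\in M^\ast\oplus K^\ast\ |\ \gamma(\overline{m^\ast})=\overline{k^\ast}\ \text{in}\ A_K\},
\end{equation*}
and verify directly that $L_\gamma$ is an integral lattice (since $q_K\gamma=-q_M$ on bilinear forms), that it is even (by the $\bq/2\bz$ statement), and that it is unimodular by the index computation $[L_\gamma:M\oplus K]=|A_M|$, $[(M\oplus K)^\ast:L_\gamma]=|A_K|$, so $[L_\gamma^\ast:L_\gamma]=1$. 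By construction $M\subset L_\gamma$ is primitive with orthogonal complement exactly $K$, and the discriminant isomorphism it induces is $\gamma$. This shows the two constructions $\{M\hookrightarrow L\leftrightarrow \gamma\}$ are mutually inverse.

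Next I would address the two equivalence statements. An isomorphism of primitive embeddings $(L,M\hookrightarrow L)\to (L^\prime,M\hookrightarrow L^\prime)$ is an isometry $L\to L^\prime$ that is the identity on $M$; such an isometry automatically sends $M^\perp_L=K$ to $M^\perp_{L^\prime}=K$ (since $M$ determines its orthogonal complement), hence restricts to some $\psi\in O(K)$, and the compatibility of $\psi$ with the glue subgroup $H\subset A_M\oplus A_K$ reads precisely $\gamma^\prime=\overline{\psi}\circ\gamma$; conversely, any $\psi\in O(K)$ with $\gamma^\prime=\overline{\psi}\circ\gamma$ extends via $\mathrm{id}_M\oplus\psi$ on $M^\ast\oplus K^\ast$ to an isomorphism $L_\gamma\to L_{\gamma^\prime}$ by the explicit description of $L_\gamma$. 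For the statement about isomorphic primitive \emph{sublattices} one allows an arbitrary identification of $M$ with itself, i.e.\ one also introduces $\phi\in O(M)$; the same argument, now applied on both factors, yields the relation $\gamma\circ\overline{\phi}=\overline{\psi}\circ\gamma^\prime$.

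The conceptually straightforward but technically delicate step is the passage from isometries of $L$ to the pair $(\phi,\psi)$ and back: one must check that an arbitrary $\phi\in O(M)$, $\psi\in O(K)$ satisfying the glueing relation really does extend to an isometry of the overlattice, and conversely that every isometry of $L$ preserving (a conjugate of) $M$ decomposes in this block form. I expect this bookkeeping, together with the counting argument that forces $H$ to be a graph, to be the main work; evenness/unimodularity of $L_\gamma$ and the correspondence with $\gamma$ are then formal consequences of the discriminant-form formalism recalled in Section~\ref{sec:discformtech}.
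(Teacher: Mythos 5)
The paper does not actually prove this proposition: it is quoted verbatim as Proposition 1.6.1 of \cite{Nik1}, so there is no in-paper argument to compare against. Your reconstruction is the standard overlattice (``glue group'') proof, which is essentially the argument in \cite{Nik1} itself, and it is correct: the graph/counting step, the explicit construction of $L_\gamma$, and the reduction of both equivalence statements to the action of $\overline{O(M)}$ and $\overline{O(K)}$ on the glue isomorphism are all the right ingredients. The only slip is cosmetic: primitivity of $M$ in $L$ gives injectivity of the projection $H\to A_K$ (an element of $\ker(H\to A_K)$ is represented by some $(m^\ast,0)\in L$, forcing $m^\ast\in M$), while the automatic primitivity of $K=M^\perp_L$ gives injectivity of $H\to A_M$ — you have the two roles interchanged — but since both projections are injective in any case, the index count $|H|^2=|A_M|\cdot|A_K|$ still forces both to be bijections and the rest of the argument is unaffected.
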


\section{A proof of Theorem \ref{maintheorem}}
\label{proofmaintheorem}.

We remind that Niemeier lattices are negative definite even unimodular lattices
of the rank $24$. There are $24$ such lattices, up to isomorphisms,
$N=N_j$, $j=1,2,\dots,24$, classified by Niemeier. They are characterized
by their root sublattices $N^{(2)}$ generated by all their elements
with square $(-2)$ (called roots). Further, $\Delta(N)$ is the set of all roots of $N$.
We have the following list of Niemeier lattices $N_j$ where the number $j$ is shown in the bracket:
$$
N^{(2)}=[\Delta(N)]=
$$
$$
(1)\ D_{24},\ (2)\ D_{16}\oplus E_8,\ (3)\ 3E_8,\ (4)\ A_{24},\
(5)\ 2D_{12},\ (6)\ A_{17}\oplus E_7, \ (7)\ D_{10}\oplus 2E_7,
$$
$$
(8)\ A_{15}\oplus D_9,\
(9)\ 3D_8,\
(10)\ 2A_{12},\ (11)\ A_{11}\oplus D_7\oplus E_6,\ (12)\ 4E_6,\
(13)\ 2A_9\oplus D_6,\
$$
$$
(14)\ 4D_6,\
(15)\ 3A_8,\ (16)\ 2A_7\oplus 2D_5,\ (17)\ 4A_6,\ (18)\ 4A_5\oplus D_4,\
(19)\ 6D_4,
$$
$$
(20)\ 6A_4,\
(21)\ 8A_3,\ (22)\ 12A_2,\ (23)\ 24A_1
$$
give $23$ Niemeier lattices $N_j$. The last is Leech lattice (24)
with $N^{(2)}=\{0\}$ which has no roots. Further,
$N(R)$ denotes the Niemeier lattice with the root system $R$. We
fix the basis $P(N)$ of the root system $\Delta(N)$ of $N$. By
$A(N)\subset O(N)$ we denote the subgroup of the group $O(N)$ of
automorphisms of $N$ which permute the basis $P(N)$.

\medskip

Let us consider the lattice $S$ for one of types (given by {\bf n}) of
the group $G$ and for a type of the degeneration given by a Dynkin diagram $R$.
All such possibilities are enumerated in Table 1.

We consider all possible markings $S\subset N_j$ by Niemeier lattices.
It means that: $S\subset N_j$ is a primitive sublattice that is $N_j/S$ has
no torsion; $P(S)\subset P(N_j)$.

By \cite[Remark 1.14.7]{Nik1},
$$
G=\{g\in A(N_j)\ |\ g|S^\perp_{N_j}=identity\}\subset A(N_j).
$$

It follows that we can find all such lattices $S$ as follows.

Find a Niemeier lattice $N_j$, a subgroup $G\subset A(N_j)$
(up to conjugacy in $A(N_j))$, an element $\alpha \in P(N_j)$
such that $G(\alpha)$ has the Dynkin diagram $R$, and the
primitive sublattice $S=[(N_j)_G,\alpha]_{pr}\subset N_j$
has a primitive embedding into $L_{K3}$. Then $S$, $G|S$ and
$G(\alpha)\subset P(S)$ correspond to a degeneration of
codimenion one of some K\"ahlerian K3
surfaces by Global Torelli Theorem and epimorphicity of Torelli
map for K\"ahlerian K3 surfaces \cite{BR}, \cite{Kul}, \cite{PS}, \cite{Siu}, \cite{Tod}.

Obviously, the isomorphism class of $S$ does not change if $G$
is changed by conjugacy in $A(N_j)$, the element $\alpha$ is
changed to $h(\alpha)$ by $h\in Normalizer(A(N_j),G)$.

All such triplets $(N_j,G,\alpha)$ (up to isomorphisms)
are shown in columns of Table 2 below using results of \cite{Nik7}, \cite{Nik8}
and \cite{Nik9} and the program GAP, \cite{GAP}. In Table 2, for all possible {\bf n}
and the type $R$ (Dynkin diagram) of the degeneration, the first line of the
column  gives $j$, the second line gives the group $G=H_{n,t}\subset A(N_j)$
(in notations of \cite{Nik7}, \cite{Nik8}
and \cite{Nik9}). The third line gives $\alpha\in P(N_j)$ which give
different orbits $G(\alpha)\subset P(N_j)$ with the Dynkin diagram
of the type $R$, but these orbits are conjugate by the $Normalizer(A(N_j),G=H_{n,t})$.
Thus, $S=[(N_j)_{H_{n,t}},\alpha]_{pr}\subset N_j$, up to isomorphisms.
The fourth line gives the Dynkin diagram of the root system $(S^\perp_{N_j})^{(2)}$ of
elements with square $(-2)$ in $S^\perp_{N_j}$. The fifth line
(if it is necessary) gives the number of elements with square $(-4)$
in $S^\perp_{N_j}$.

In Table 1 below we calculate the genus of the lattices $S$ for all possible triplets
$(N_j,G,\alpha)$ (equivalently, columns of Table 2),
using invariants and relations (\ref{relations:a-k}) of Sect. \ref{sec:discformtech}.
We use Programs 7 and 8 from Appendix, Sect. \ref{sec:Appendix}. The genus is defined by the types
{\bf n} and $R$ of the degeneration except two cases. For {\bf n}$=10$, $R=2\aaa_1$ and
{\bf n}$=34$, $R=6\aaa_1$ there are
two possibilities for the genus which we label by $(2\aaa_1)_I$, $(2\aaa_1)_{II}$ and
by $(6\aaa_1)_I$, $(6\aaa_1)_{II}$ respectively. Here $I$ and $II$ show the type of the
corresponding $2$-adic lattices. In Table 1, we also give the genus of the lattice $S_G$
which was calculated by K. Hashimoto in \cite{Hash} (it is useful to compare genuses of
$S_G$ and $S$).

Let us consider a case $({\bf n},R)$ which is different from $(4,\aaa_1)$ and $(16,\aaa_1)$.
In Table 2, one of columns of this case $({\bf n},R)$ is marked by $\ast$ from above. We
denote the lattice $S$ of this case by ${\bf S}$.  The
orthogonal complement ${\bf S}_{N_j}^\perp$ of this case either has the root system $({\bf S}_{N_j}^\perp)^{(2)}$ of
elements with square $(-2)$ which is different from all other columns of this case, or
it has different number of elements with square $(-4)$ (the last happens for $(10,(2\aaa_1)_{II})$,
$(34,(6\aaa_1)_{II})$ and $(51,8\aaa_1)$).  Since genuses of lattices $S$ of all other columns and
the genus of ${\bf S}$ are the same, by Proposition \ref{prop:primembunim}, there exists an isomorphism $\gamma:A_S\cong A_{{\bf S}_{N_j}^\perp}$
such that $q_S\circ \gamma=-q_{{\bf S}_{N_j}^\perp}$. By Proposition
\ref{prop:primembunim}, this defines a primitive embedding
$S\subset N$ into one of Niemeier lattices $N$ such that $S_N^\perp={\bf S}_{N_j}^\perp$. Changing
this embedding by $w\in W(N)$ for the reflection group $W(N)$ of $N$, if necessary,
we can assume that $P(S)\subset P(N)$ and $S\subset N$ is isomorphic to one of columns of
the case $({\bf n},R)$. Since $S^\perp_N\cong {\bf S}_{N_j}^\perp$ and the column with this
property is unique, we obtain that $S\cong {\bf S}$. Thus, the lattice $S$ is unique up
to isomorphisms.

For the cases $({\bf n}=4,\aaa_1)$ and $({\bf n}=16,\aaa_1)$
(and all other cases $({\bf n}, \aaa_1)$ as well) we have that
$K=S_G\oplus \left< -2\right>\subset S$ is an overlattice
of a finite index, by definition.
Discriminant groups of $S_G$ and their orders are is known (e.g.
see  \cite{Hash}) (they can be found from the Table 1).
Discriminant groups of $S$ and their orders are calculated in \cite{Nik9}
(they can be found from the Table 1). It follows that orders
of the discriminant groups of the lattices $K$ and $S$ are the same.
It follows that $S=K=S_G\oplus \left<-2\right>$, and it is
unique up to isomorphisms since the lattices $S_G$ are unique up to isomorphisms
by \cite{Hash}.

This finishes the proof of Theorem \ref{maintheorem}.


\begin{table}
\label{table1}
\caption{Genuses of degenerations of codimension $1$ of K\"ahlerian K3 surfaces
with finite symplectic automorphism groups $G=Clos(G)$.}




\end{center}


\section{Transcendental lattices $T=S^\perp_{L_{K3}}$.}
\label{sec:transclat}

For the fixed type {\bf n} of a finite symplectic automorphism group $G$
and the fixed type of degeneration of codimension one $P$
(Dynkin diagram), a general K\"aherian K3 surface $X$ has the
Picard lattice $S_X\cong S$ described in Theorem \ref{maintheorem}.
Its transcendental lattice $T_X$ is the orthogonal complement
$T_X=(S_X)^\perp_{H^2(X,\bz)}$ where $H^2(X,\bz)$
is an even unimodular lattice of signature $(3,19)$. It is unique
up to isomorphisms, and we denote its isomorphism class as $L_{K3}$.

Thus $T_X\cong T=(S)^\perp_{L_{K3}}$ where $S\subset L_{K3}$ is some
primitive embedding. By epmimorphicity of Torelli map for K3 surfaces,
any such primitive embedding corresponds to K3 surfaces. By Proposition
\ref{prop:primembunim}, the transcendental lattice $T$ can be any lattice
with invariants $(3,19-\rk S, q_T\cong -q_S)$ which are equivalent to
the genus of $T$.

We use the following theorem from \cite{Nik1} which follows from results
by M. Kneser.

\begin{theorem} (\cite[Theorem 1.13.2]{Nik1}) An even lattice $K$ with invariants
$(t_{(+)},t_{(-)}, q)$ is unique if simulataneously

1) $t_{(+)}\ge 1$, $t_{(-)}\ge 1$, $t_{(+)}+t_{(-)}\ge 3$;

2) for each $p\not=2$, either $\rk K\ge l(A_{q_p})+2$, or
$$
q_p\cong q_{\theta_1}^{(p)}(p^k)\oplus q_{\theta_2}^{(p)}(p^k)\oplus q_p^\prime;
$$

3) for $p=2$, either $\rk K\ge l(A_{q_2})+2$, or $q_2\cong u_+^{(2)}(2^k)\oplus q_2^\prime$,
or $q_2\cong v_+^{(2)}(2^k)\oplus q_2^\prime$, or
$$
q_2\cong q_{\theta_1}^{(2)}(2^k)\oplus q_{\theta_2}^{(2)}(2^{k+1})\oplus q_2^\prime.
$$
\label{th:uniqlat}
\end{theorem}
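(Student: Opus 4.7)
The plan is to follow the classical spinor genus approach pioneered by Kneser. Condition (1) guarantees that $K$ is indefinite with $\rk K\ge 3$, so the Eichler--Kneser strong approximation theorem implies that every spinor genus inside the genus of $K$ consists of a single isomorphism class. It therefore suffices to show that the genus of $K$ contains only one spinor genus. Via the idelic description of the genus-to-spinor-genus quotient, this amounts to proving that
\[
J_{\bq}\bigm/\bq^\ast\cdot \theta(O^+(K\otimes \br))\cdot \prod_p \theta(O^+(K\otimes \bz_p))
\]
is trivial, where $\theta$ denotes the spinor norm. Because $K\otimes \br$ is indefinite, $\theta(O^+(K\otimes \br))$ contains both positive and negative reals; combined with weak approximation for $\bq^\ast$ in the ideles, the triviality reduces to the purely local statement that at every prime $p$ the image $\theta(O^+(K\otimes \bz_p))$ contains all of $\bz_p^\ast$ modulo squares.

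I would then verify this local surjectivity in two regimes. First, in the large rank case $\rk K\ge l(A_{q_p})+2$, the Jordan decomposition of $K\otimes \bz_p$ necessarily carries a unimodular summand of rank $\ge 2$; a binary $p$-adic unimodular lattice represents every residue class in $\bz_p^\ast/(\bz_p^\ast)^2$, so reflections in suitably chosen unit vectors of this summand produce every $p$-adic unit as a spinor norm. Second, in the small rank case, the structural hypotheses in (2) and (3) supply an explicit orthogonal $p$-adic summand whose reflection group achieves the same conclusion: for $p\ne 2$, reflecting within the binary summand $\left<\theta_1 p^k,\theta_2 p^k\right>$ produces ratios of the form $\theta_1/\theta_2$, and varying $\theta_i$ through $\bz_p^\ast$ yields every unit modulo squares; for $p=2$, the corresponding computation is run separately for each of the three allowed dyadic summands $U^{(2)}(2^k)$, $V^{(2)}(2^k)$, and $\left<\theta_1 2^k\right>\oplus\left<\theta_2 2^{k+1}\right>$, in each case using the explicit description of reflection spinor norms together with relations (\ref{relations:a-k}).

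The main obstacle is the dyadic case $p=2$, where $\bz_2^\ast/(\bz_2^\ast)^2$ has order $8$ and the $2$-adic lattice is not always determined by its discriminant form (see \cite[Thm.\ 1.9.1]{Nik1}); the three splitting alternatives in condition (3) must be treated individually, with care about the distinction between $u_\pm$ and $v_\pm$ and about the level shift present in the last alternative. Once these local surjectivity statements are established, assembling them via the idelic formulation together with indefiniteness from (1) collapses the number of spinor genera to one, which by strong approximation collapses the genus to a single isomorphism class, completing the proof.
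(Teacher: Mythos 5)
The paper itself offers no proof of this statement: it is imported verbatim from \cite[Theorem 1.13.2]{Nik1} with the remark that it ``follows from results by M.~Kneser.'' Your overall strategy --- Eichler strong approximation to identify classes with spinor genera, then showing the genus contains a single spinor genus by controlling local spinor norms --- is exactly the framework behind the cited result, so at the level of architecture you are on the right track. Your treatment of the easy cases is also correct: for odd $p$ a unimodular binary $\bz_p$-sublattice (or a summand $p^k(\left<\theta_1\right>\oplus\left<\theta_2\right>)$) represents $p^k u$ for every unit $u$, and for $p=2$ the summands $U^{(2)}(2^k)$ and $V^{(2)}(2^k)$ represent $2^{k+1}u$ for every unit $u$, so reflections there do generate all of $\bz_p^\ast$ modulo squares.

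The gap is in the third dyadic alternative of condition 3), and it is not a matter of missing detail: the ``purely local statement'' you reduce to is false there. Take $L=\left<\theta_1 2^k\right>\oplus\left<\theta_2 2^{k+1}\right>$. The values $Q(v)$ of reflection vectors preserving $L$ lie in $2^k\{\theta_1,\theta_1+2\theta_2\}\cup 2^{k+1}\{\theta_2,\theta_2+2\theta_1\}$ modulo squares, and the subgroup of $\bq_2^\ast/(\bq_2^\ast)^2$ they generate meets $\bz_2^\ast/(\bz_2^\ast)^2$ (which has order $4$) in a subgroup that can have order $2$; for instance $\theta_1\equiv 1\bmod 8$ and $k$ even yields only $\{1,\,1+2\theta_2\}$. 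This cannot be repaired by cleverer isometries of that summand: writing $\theta_1 2^k x^2+\theta_2 2^{k+1}y^2=\theta_1 2^k\,N_{E/\bq_2}(x+y\sqrt{-2\theta_2/\theta_1})$ with $E=\bq_2(\sqrt{-2\theta_1\theta_2})$ ramified, one sees $\theta(O^+(L\otimes\bq_2))$ is contained in the norm group of $E$, whose unit part already has index $2$ in $\bz_2^\ast/(\bz_2^\ast)^2$. So local surjectivity of the spinor norm at $p=2$ is simply not available from the designated summand, and need not hold for $K\otimes\bz_2$ at all. The correct deduction must use the sharper global form of the criterion: the genus is a single spinor genus if and only if $J_{\bq}=\bq^\ast J^{\theta}$, and when the dyadic group has index $2$ the missing square class is supplied by a global element such as $-1$ or $2$, which conditions 1) and 2) guarantee to lie in $\theta(O^+(K\otimes\bz_p))$ at every other place (this is where indefiniteness and the odd-prime hypotheses re-enter, beyond merely killing the archimedean sign). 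Alternatively one must invoke genuinely finer computations of dyadic spinor norm groups of the whole lattice (Earnest--Hsia type results), not just of the distinguished binary piece. As written, your argument breaks at exactly this point, which is the delicate part of Kneser's theorem and the reason condition 3) has the shape it does.
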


From this Theorem, we then obtain

\begin{theorem}
For a fixed type {\bf n} of a finite symplectic automorphism group $G$
and the fixed type of degeneration of codimension one $P$
(Dynkin diagram), a general K\"aherian K3 surface $X$ has
a unique, up to isomorphisms, transcendental lattice
$T_X\cong T=(S)^\perp_{L_{K3}}$ if $\rk S\le 18$
(by Theorem \ref{th:uniqlat}).

If $\rk S=19$ (equivalently, $\rk T=3$, and then $T$ is positive definite),
then the isomorphism class of the transcendental lattice $T$ is
given in the Table 3 below. The transcendental lattice $T$ is unique
except $({\bf n}=55, 10\aaa_1)$ when there are two possible
isomorphism classes.

Thus, for $({\bf n}=55, 10\aaa_1)$ (equivlaently, $G\cong \AAA_5$ and
the degeneration has the type $10\aaa_1$) there are two non-equivalent
degenerations of codimension one of K\"ahlerian K3 surfaces
which have non-isomorphic transcendental lattices.
\label{th:transuniqe}
\end{theorem}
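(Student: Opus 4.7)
The plan is to split the argument by the rank of the transcendental lattice $T$, which has signature $(3,19-\rk S)$ and discriminant form $q_T\cong -q_S$.

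For $\rk S\le 18$ one has $\rk T\ge 4$ together with $t_{(+)}\ge 1$ and $t_{(-)}\ge 1$, so condition (1) of Theorem \ref{th:uniqlat} is automatic. The argument then reduces to checking, row by row of Table 1, that for each prime $p$ at least one of the two alternatives in conditions (2) and (3) of Theorem \ref{th:uniqlat} holds for $q_{T,p}$. In most rows the simple inequality $\rk T\ge l(A_{q_p})+2$ does the job, since the discriminant groups appearing in Table 1 are small; in the remaining rows one reads off directly from the Jordan symbol a summand of the form $q_{\theta_1}^{(p)}(p^k)\oplus q_{\theta_2}^{(p)}(p^k)$ for $p\ne 2$, or one of the three permitted $2$-adic summands. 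Because the information to verify this is entirely contained in Table 1, I would simply tabulate the verification; no new idea is needed here.

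The real content is the case $\rk S=19$, where $T$ is positive definite of rank~$3$, and uniqueness in the genus is not formal: ternary positive definite genera generically carry more than one class. My plan is first to record the genus of $T$ for every row of Table 1 with $\rk S=19$ (using $q_T\cong -q_S$ and $\det T=|A_S|$), and then to pin down the isomorphism class inside each genus. For each such genus I would either (a) exhibit an explicit Gram matrix and verify that the genus is one-class by computing the Minkowski--Siegel mass and matching it against $1/|O(T)|$ for the candidate, or (b) produce two non-isomorphic representatives and show via the mass formula that there are no further classes. The concrete candidates can be read off from the Niemeier constructions in Table~2: given a primitive embedding $S\subset N_j$, the lattice $(N_j)^\perp_S$ together with the lattice $U\oplus U$ (or a suitable unimodular complement) is glued via an isomorphism of discriminant forms, as permitted by Proposition \ref{prop:primembunim}, to an overlattice of $L_{K3}$ in which the orthogonal complement of $S$ is the sought transcendental lattice. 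This feeds directly into the entries of Table~3.

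The main obstacle is the exceptional case $(\mathbf{n}=55,\,10\aaa_1)$. Here the plan is to identify two different gluings in the relevant block of Table 2 that yield non-isomorphic transcendental lattices, and to certify by the mass formula that the genus consists of exactly these two classes. The non-isomorphism must be checked by a genuine lattice invariant not captured by the discriminant form --- for instance, the representation numbers of small integers (equivalently, the theta series up to a low-order term), or the structure of the orthogonal group $O(T)$. Once two representatives are exhibited and distinguished, and the mass is shown to equal $1/|O(T_1)|+1/|O(T_2)|$, both halves of the theorem follow: uniqueness in all other rows of $\rk S=19$, and the existence of two non-equivalent degenerations of codimension one with distinct transcendental lattices precisely for $G\cong \AAA_5$ with degeneration $10\aaa_1$. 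The rest of the proof is a careful matching of Table 2 data with the genus list of Table 1, which is largely mechanical given the programs of Section \ref{sec:Appendix}.
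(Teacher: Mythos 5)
Your overall strategy is sound and the first half (the case $\rk T\ge 4$) coincides with the paper's: one simply checks the hypotheses of Theorem \ref{th:uniqlat} against the genus data of Table 1, which is mechanical. For the substantive case $\rk T=3$ you take a genuinely different route. The paper disposes of it in one line by looking up the isomorphism classes in the known tables of positive definite ternary lattices of small determinant (\cite[Ch.~15, Sects.~3, 10]{CS}, determinants $\le 50$ sufficing), whereas you propose to re-derive the class numbers from scratch: exhibit explicit Gram matrices and certify completeness by matching the Minkowski--Siegel mass against $\sum_i 1/|O(T_i)|$, distinguishing the two classes in the exceptional genus $({\bf n}=55,10\aaa_1)$ by representation numbers or by $|O(T)|$. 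This is a legitimate and self-contained alternative --- indeed it is how the Conway--Sloane tables are themselves validated --- at the cost of considerably more computation per genus. One detail of your plan is off, though not fatally: the proposed construction of candidate lattices $T$ by gluing $S^\perp_{N_j}$ with $U\oplus U$ cannot produce $S^\perp_{L_{K3}}$, since $N_j\oplus U\oplus U$ has signature $(2,26)$ rather than $(3,19)$; the Niemeier complements are negative definite and live in the wrong unimodular lattice. The correct source of candidates is simply Proposition \ref{prop:primembunim}: $T$ ranges over the entire genus with invariants $(3,19-\rk S,-q_S)$, and explicit ternary representatives must be found by reduction theory (or from tables), not from Table 2.
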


\begin{proof} The genus of $S$ and then $T$
is calculated in Table 1 above.

If $\rk T\ge 4$, it satisfies Theorem \ref{th:uniqlat}.

If $\rk T = 3$, we calculate $T$ in Table 3 using known tables of
positive definite lattices of the rank $2$ and $3$ for
small determinant (determinants $\le 50$ are enough). See
\cite[Ch. 15, Sects 3, 10]{CS}.
\end{proof}

\begin{table}
\label{table3}
\caption{Transcendental lattices of the rank $3$
of degenerations of codimension $1$ of K\"ahlerian K3 surfaces
with finite symplectic automorphism groups $G=Clos(G)$.}



\begin{tabular}{|c||c|c|c|c|c|c|c|c|c|}
\hline
 {\bf n}& $|G|$& $i$&  $G$ & $Deg$ & $q_T$ &$T$          \\
\hline
\hline
 $26$& $16$& $8$&$SD_{16}$ & $8\aaa_1$& $2_{-1}^{+1},4_{-1}^{+1},16_{-3}^{-1}$ &
 $\left< 2 \right> \oplus \left< 4 \right> \oplus \left< 16 \right>$\\
\hline
     &     &    &        & $2\aaa_2$ & $2_{-5}^{-1},8_{II}^{-2}$ &
$\left<
\begin{array}{ccc}
6 & 2 & -2 \\
2 & 6 &  2 \\
-2& 2 &  6
\end{array}
\right>
$ \\
\hline
\hline
 $32$& $20$& $3$&$Hol(C_5)$&$2\aaa_1$& $4_{-1}^{+1},5^{+3}$ & $A_3(-5)$\\
\hline
     &     &    &          &$5\aaa_1$ &$2_{-1}^{+3},5^{-2}$ &
$\left<
\begin{array}{ccc}
10 & 0 & 0 \\
0  & 4 & 2 \\
0  & 2 & 6
\end{array}
\right>
$ \\
\hline
     &     &    &          &$10\aaa_1$& $4_3^{-1},5^{+2}$ &
$\left<
\begin{array}{ccc}
4 & 2 & 2 \\
2 & 6 &  1 \\
2& 1 &  6
\end{array}
\right>
$ \\
\hline
     &     &    &        &$5\aaa_2$& $2_3^{-1},5^{-2}$ &
$\left<
\begin{array}{ccc}
4 & 1 & -1 \\
1 & 4 &  1 \\
-1& 1 &  4
\end{array}
\right>
$ \\
\hline
\hline
$33$ & $21$ & $1$ & $C_7\rtimes C_3$ & $7\aaa_1$ & $2_{-1}^{+1},7^{+2}$ &
$\left<
\begin{array}{ccc}
14 & 0 & 0 \\
0 &  4 &-1 \\
0& -1  & 2
\end{array}
\right>
$ \\
\hline
\hline
$46$ &$36$ &$9$ &$3^2C_4$ & $6\aaa_1$   & $4_1^{+1},3^{-1},9^{+1}$
& $\left< 36 \right> \oplus A_2(-1)$ \\
\hline
     &     &    &         & $9\aaa_1$   & $2_{-5}^{-3},3^{+2}$ &
$\left< 2 \right>\oplus \left< 6 \right> \oplus \left< 6 \right>$ \\
\hline
     &     &    &        &  $9\aaa_2$   & $2_{-5}^{-1},3^{+2}$ &
$\left< 6 \right>\oplus A_2(-1)$ \\
\hline
\hline
$48$ &$36$ &$10$&$\SSS_{3,3}$&$3\aaa_1$& $2_3^{+3},3^{-2},9^{+1}$ &
$\left< 6 \right>\oplus A_2(-6)$ \\
\hline
     &     &    &        &  $6\aaa_1$& $4_{-1}^{+1},3^{+2},9^{+1}$ &
$\left<
\begin{array}{ccc}
6 & 0 & 3 \\
0 & 6& 3 \\
3 & 3 & 12
\end{array}
\right>
$ \\
\hline
     &     &    &        &  $9\aaa_1$&$2_1^{-3},3^{-3}$
& $3\left< 6 \right>$ \\
\hline
\hline

$51$ & $48$&$48$&$C_2\times \SSS_4$& $2\aaa_1$ & $4_{-1}^{+3},3^{+2}$ &
$\left< 4 \right> \oplus \left< 12 \right>\oplus \left< 12 \right>$ \\
\hline
     &     &    &                  & $4\aaa_1$ & $2_{II}^{+2},8_{-1}^{+1},3^{+2}$ &
$\left<
\begin{array}{ccc}
8 & 2 & -4 \\
2 & 8 &  2 \\
-4& 2 &  8
\end{array}
\right>
$ \\
\hline
     &     &    &                  & $6\aaa_1$   & $4_1^{-3},3^{-1}$ &
$\left< 4 \right>\oplus A_2(-4)$\\
\hline
     &     &    &                  & $8\aaa_1$   & $2_{II}^{-2},4_3^{-1},3^{+2}$ &
$\left< 12 \right>\oplus A_2(-2)$ \\
\hline
     &     &    &                  & $12\aaa_1$  & $2_{II}^{-2},8_1^{+1},3^1$ &
$\left< 8 \right>\oplus A_2(-2)$ \\
\hline
\hline
\end{tabular}
\end{table}

\begin{table}
\begin{tabular}{|c||c|c|c|c|c|c|c|c|}
\hline
 {\bf n}& $|G|$& $i$&  $G$ & $Deg$ & $q_T$ &$T$          \\
\hline
\hline
$55$ & $60$&$5$ &$\AAA_5$ & $\aaa_1$ & $2_1^{-3},3^{-1},5^{-2}$ &
$\left< 2 \right> \oplus A_2(-10)$
\\
\hline
     &     &    &         &  $5\aaa_1$& $2_5^{+3},3^{-1},5^{+1}$ &
$\left<
\begin{array}{ccc}
6 & 0 & 0 \\
0 & 4 &  2 \\
0 & 2 &  6
\end{array}
\right>
$ \\
\hline
     &     &    &        &    $6\aaa_1$&  $4_{-1}^{+1},5^{-2}$ &
$\left<
\begin{array}{ccc}
2 & 1 & 1 \\
1 & 8 & 3 \\
1 & 3 & 8
\end{array}
\right>
$ \\
\hline
     &     &    &        &     $10\aaa_1$ & $4_1^{+1},3^{-1},5^{-1}$ &
$\left<
\begin{array}{ccc}
2 & 0  & 1 \\
0 & 2 &  1 \\
1&  1 &  16
\end{array}
\right>, \
\left<
\begin{array}{ccc}
4 & 0 &  0 \\
0 & 4 &  1 \\
0 & 1 &  4
\end{array}
\right>
$ \\
\hline
     &     &    &        &      $15\aaa_1$ & $2_3^{+3},5^{-1}$ &
$\left< 2 \right>\oplus \left< 2 \right> \oplus \left< 10 \right>$\\
\hline
\hline
$56$ &$64$&$138$&$\Gamma_{25}a_1$&$8\aaa_1$ & $4_4^{-2},8_3^{-1}$ &
$\left< 4 \right>\oplus \left< 4 \right>\oplus \left< 8 \right>$               \\
\hline
     &     &    &        &       $16\aaa_1$ & $4_3^{+3}$& $3\left< 4 \right>$  \\
\hline
\hline
$61$ &$72$ &$43$&$\AAA_{4,3}$   & $3\aaa_1$& $2_3^{-1},4_{II}^{+2},3^{+2}$ &
$\left< 6 \right>\oplus A_2(-4)$\\
\hline
     &     &    &               &$12\aaa_1$& $8_{-1}^{+1},3^{+2}$ &
$\left<
\begin{array}{ccc}
2 & 0 & 1 \\
0 & 6 &  3 \\
1  & 3  &  8
\end{array}
\right>$  \\
\hline
\hline
$65$ &$96$ &$227$&$2^4D_6$      &   $4\aaa_1$&$4_{-3}^{-3},3^{+1}$     &
$\left< 4 \right>\oplus \left< 4 \right>\oplus \left< 12 \right>$  \\
\hline
     &     &     &              &   $8\aaa_1$& $2_{II}^{-2},8_1^{+1},3^{+1}$   &
$\left< 8 \right>\oplus A_2(-2)$\\
\hline
     &     &     &              &   $12\aaa_1$&$4_3^{+3}$ &
$3\left< 4 \right>$\\
\hline
     &     &     &              &   $16\aaa_1$&$2_{II}^{+2},4_5^{-1},3^{+1}$&
$\left< 4 \right>\oplus A_2(-2)$ \\
\hline
\hline
 $75$&$192$&$1023$&$4^2\AAA_4$  &   $16\aaa_1$& $2_{II}^{-2},8_3^{-1}$ &
 $A_3(-2)$               \\
\hline
\hline
\end{tabular}
\end{table}

\newpage

\section{Connected components of moduli of K\"ahlerian K3 surfaces
with a negative definite Picard lattice $M$ and a transcendental lattice $K$}
\label{sec:moduliMK}

Here we apply our results in \cite{Nik0}, \cite{Nik2} about
description of connected components of moduli of K\"ahlerian K3 surfaces
with conditions on Picard lattice. Using these methods and results,
we want to describe connected components of moduli of K\"ahlerian
K3 surfaces $X$ such that the Picard lattice $S_X$ contains
a fixed negative definite primitive sublattice $M$ and the orthogonal
complement $M^\perp_{H^2(X,\bz)}=K$ where the lattice $K$ is also fixed.
A general such K3 surface $X$ has the Picard lattice $S_X=M$ and the
transcendental lattice $T_X=K$.

The lattices $M$ and $K$ are orthogonal complements to each other in the even
unimodular lattice $H^2(X,\bz)$. This defines a canonical isomorphism
$\varphi:q_S\cong -q_K$ which is equivalent to the natural finite index extension
$M\oplus K\subset H^2(X,\bz)$:
$$
H^2(X,\bz)=\{m^\ast \oplus k^\ast\ |\ m^\ast\in M^\ast, k^\ast \in K^\ast,\
\varphi(m^\ast+M)=k^\ast+K\}.
$$
Periods
$$
H^{2,0}(X)+H^{1,1}(X)+H^{0,2}(X)
$$
of $X$ are equivalent to the
positive definite 2-dimensional oriented subspace
$$
\Pi_2(X)=(H^{2,0}(X)+H^{0,2}(X))\cap K\otimes \br\subset K\otimes \br
$$
(its orientation is equivalent to the natural orientation
of the 1-dimensional complex space $H^{2,0}(X)$).
K\"ahler class $c(X)$ of $X$ defines a half $V^+(X)$
of the cone
$$
V(X)=\{x\in H^{1,1}(X)_\br\ |\ x^2>0\}
$$
containing $c(X)$.
Together, they define continuously changing orientations in
all $3$-dimensional positive definite subspaces $\Pi_3\subset H^2(X,\bz)\otimes \br$
and $\Pi_3\subset K\otimes \br$ such that
an oriented basis of $\Pi_2(X)$ together with
$c(X)$ define an oriented basis in $\Pi_2(X)+\br c(X)$.
We denote this orientation as $o(X)\in \{+,-\}$ and the corresponding
$K\otimes \br$ with the choice of such orientation as $(K\otimes \br)_{o(X)}$.
Moreover, effective elements $\delta\in M$ with $\delta^2=-2$
define a fundamental decomposition $P(X):\Delta(M)=
\Delta^+(M)\cup -\Delta^+(M)$
of the set $\Delta(M)$ of roots of $M$ with square $-2$ where
elements of $\Delta^+(M)$ are effective.

We can consider the 4-tuple
$$
(P(X),\varphi(X), o(X),\Pi_2(X))
$$
as periods of a marked K3 surface $X$ with $M\subset S_X$ and
$M^\perp_{H^2(X,\bz)}=K$.

Let $W^{(2)}(M)$ be the group
generated by reflections in elements of $\Delta(M)$. It acts identically
on the discriminant group and the discriminant form $q_M$.
By changing $M\subset S_X$ by $w:M\to M\subset S_X$
where $w\in W^{(2)}(M)$, we can assume that
$P(X)=P$ where the decomposition $P:\Delta(M)=\Delta^+(M)\cup -\Delta^+(M)$
is fixed. This does not changes $\varphi(X),o(X),\Pi_2(X)$.
Let $w_0\in W^{(2)}(M)$ changes the decomposition
$P=\Delta^+(M)\cup -\Delta^+(M)$ to $-P=-\Delta^+(M)\cup \Delta^+(M)$.
Then, by changing $M\subset S_X$ by $-w_0:M\to M\subset S_X$,
$K\subset M^\perp_{H^2(X,\bz)}$ by $-id_K:K\to K\subset M^\perp_{H^2(X,\bz)}$,
if necessary, we can assume that $o(X)=+$ is fixed.
Here, we use that $3$ is odd. Thus, periods of marked in this way
K3 surfaces are given by the pair
$$
(\varphi(X), \Pi_2(X)\subset (K\otimes \br)_+).
$$
It follows that for a fixed isomorphism $\varphi(X)=\varphi:q_M\cong -q_K$ the
spaces of periods and moduli of such K3 surfaces are connected by Global
Torelli Theorem \cite{PS}, \cite{BR} and epimorphicity of period map
\cite{Kul}, \cite{Tod}, \cite{Siu}  for K3 surfaces.

By changing markings to
$g:M\to M\subset S_X$, $f^{-1}:K\to K\subset M^\perp_{H^2(X,\bz)}$
by $g\in O(M)$ with $g(P)=P$ and by $f\in O(K)$, periods
will be changed by equivalent periods
$$
(\overline{f}\varphi \overline{g}, (f\otimes \br) (\Pi_2(X))\subset (K\otimes \br)_{f(+)})
$$
and moduli.

Thus, we obtain

\begin{theorem}
The number of connected components of moduli of K\"ahlerian
K3 surfaces $X$ with Picard lattice $M$ where $M<0$,
and a transcendental lattice isomorphic to $K$ (further we shall call them
as {\bf weak connected components}) is equal to the number of
double cosets
$\overline{O(K)}\backslash O(q)/\overline{O(M)}$ where
$\overline{O(M)}$ and $\overline{O(K)}$ are images of $O(M)$ and $O(K)$
in $O(q)$ where $q=q_M=-q_K$.

Here we consider primitive embeddings $f_1:M\subset S_X$ and $f_2:M\subset S_X$
as equivalent if they are different by an automorphism of
the lattice $M$.
\label{th:moduliMKweak}
\end{theorem}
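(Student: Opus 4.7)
The statement essentially codifies the period-computation already sketched in the running text preceding the theorem, so my proof would be to turn that sketch into a careful orbit count, invoking Global Torelli and the surjectivity of the period map to translate "connected components of moduli" into "orbits in the space of admissible periods under the change-of-marking group".

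The plan is to proceed in three steps. First, I would fix a primitive embedding $M_0\subset L_{K3}$ with orthogonal complement $K_0\cong K$ together with a fundamental decomposition $P_0$ of $\Delta(M_0)$ and an orientation on the positive $3$-dimensional subspaces of $K_0\otimes\br$; call the resulting reference isomorphism $\varphi_0\colon q_{M_0}\xrightarrow{\sim}-q_{K_0}$. Second, I would recall (from the material preceding the statement) that by twisting a marking $f\colon H^2(X,\bz)\xrightarrow{\sim}L_{K3}$ with $M\subset S_X$ by a suitable element of $W^{(2)}(M)$, respectively by $(-w_0)\oplus(-\mathrm{id}_{K})$, one can always assume $P(X)=P_0$ and $o(X)=+$. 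Hence the moduli data reduces to a pair $(\varphi,\Pi_2)$ with $\varphi\colon q_M\xrightarrow{\sim}-q_K$ and $\Pi_2$ a positive oriented $2$-plane in $(K_0\otimes\br)_+$ such that $\Pi_2^\perp\cap\Delta(M_0)=\emptyset$. Third, I would verify connectivity on the period side: for a fixed $\varphi$, the space of admissible $\Pi_2$ is an open subset (the complement of a locally finite union of real hyperplanes) of the Grassmannian of positive oriented $2$-planes in $(K_0\otimes\br)_+$, which is an irreducible Hermitian symmetric domain, hence connected. By Global Torelli and epimorphicity of the period map, each $\varphi$ yields exactly one connected component of moduli.

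Next I would analyze when two markings define the same weak component. The residual change-of-marking group is $O(M)^{P_0}\times O(K)$, acting on $(\varphi,\Pi_2)$ by $(g,f)\cdot(\varphi,\Pi_2)=(\bar f\,\varphi\,\bar g^{-1},\,(f\otimes\br)\Pi_2)$, where $O(M)^{P_0}$ denotes the stabilizer of $P_0$; the additional generator $(-w_0,-\mathrm{id}_K)$ of the orientation-fixing operation is already contained in this product. The key algebraic observation is that the natural inclusion $O(M)^{P_0}\hookrightarrow O(M)$ becomes a surjection after passing to $O(q_M)$, since $O(M)=W^{(2)}(M)\rtimes O(M)^{P_0}$ and $W^{(2)}(M)$ acts trivially on $A_M$; thus the image of $O(M)^{P_0}$ in $O(q)$ is precisely $\overline{O(M)}$, and the image of $O(K)$ is $\overline{O(K)}$.

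Finally I would count: after choosing $\varphi_0$ as basepoint, the set of isomorphisms $\varphi\colon q_M\to-q_K$ becomes a torsor under $O(q)$ (with $q=q_M=-q_K$), and under the identification $\varphi\leftrightarrow\varphi_0^{-1}\varphi\in O(q)$ the two-sided action above becomes $(g,f)\cdot h=\bar f\,h\,\bar g^{-1}$. Orbits are therefore in bijection with $\overline{O(K)}\backslash O(q)/\overline{O(M)}$, and combined with the connectedness of each fiber of $(\varphi,\Pi_2)\mapsto\varphi$ this yields the asserted count of weak connected components.

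The main delicate point, and the place where one has to be careful, is the bookkeeping of orientations and sign conventions: ensuring that the single $\{+,-\}$-choice $o(X)$ can genuinely be absorbed into the $(-w_0,-\mathrm{id}_K)$ diagonal move without over- or under-counting, and that after this absorption the surviving residual action is exactly $O(M)^{P_0}\times O(K)$ (and not a quotient thereof). Once this is pinned down, the remainder is the formal torsor-to-double-coset computation together with the well-known semidirect product decomposition $O(M)=W^{(2)}(M)\rtimes O(M)^{P_0}$.
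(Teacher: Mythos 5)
Your proposal follows essentially the same route as the paper: the proof there is precisely the normalization of $(P(X),o(X))$ via $W^{(2)}(M)$ and $(-w_0,-\mathrm{id}_K)$, the reduction of periods to the pair $(\varphi,\Pi_2)$, connectivity for fixed $\varphi$ by Global Torelli and surjectivity of the period map, and the identification of the residual change-of-marking action with the double cosets $\overline{O(K)}\backslash O(q)/\overline{O(M)}$; your added justifications (that $O(M)=W^{(2)}(M)\rtimes O(M)^{P_0}$ with $W^{(2)}(M)$ acting trivially on $A_M$, so the stabilizer $O(M)^{P_0}$ has the same image $\overline{O(M)}$ in $O(q)$, together with the torsor-to-double-coset bookkeeping) correctly fill in steps the paper leaves implicit. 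The one slip is calling the space of oriented positive $2$-planes in $K\otimes\br$, of signature $(3,19-\rk M)$, an irreducible Hermitian symmetric domain --- that would require signature $(2,\ast)$ --- but this homogeneous space is nevertheless connected and the removed hyperplane loci have real codimension two, so the connectivity conclusion and hence the count stand.
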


We remark that the double cosets $\overline{O(K)}\backslash O(q)/\overline{O(M)}$
of the Theorem \ref{th:moduliMKweak} are equivalent to isomorphism
classes of primitive embeddings of the lattice $M$ into $L_{K3}$
with $M^\perp_{L_{K3}}\cong K$
where two such primitive embeddings $f_1:M\subset L_{K3}$,
$f_2:M\subset L_{K3}$ are equivalent if $f_2(M)= h(f_1(M))$ for
$h\in O(L_{K3})$.
See Proposition \ref{prop:primembunim}.
Such isomorphism classes are preserved under continuous
deformations of K3 surfaces since they are discrete data of
the deformations.

Similarly, we obtain

\begin{theorem}
The number of connected components of moduli of K\"ahlerian
K3 surfaces $X$ with Picard lattice $M$ where $M<0$, and fixed
$P(X)=P=\Delta^+(M)\cup -\Delta^+(M)$,
and a transcendental lattice isomorphic to $K$ (further we shall call them as
{\bf strong connected components}) is equal to the number of left cosets
$\overline{O^+(K)}\backslash O(q_K)$ (equivalently, to the index $[O(q_K):\overline{O^+(K)}]$) where
$O^+(K)\subset O(K)$ consists of automorphisms which preserve orientations
$(K\otimes \br)_+$ and $(K\otimes \br)_-$.

Here we consider primitive embeddings $f_1:M\subset S_X$ and $f_2:M\subset S_X$
as equivalent if they are equal.
\label{th:moduliMKstrong}
\end{theorem}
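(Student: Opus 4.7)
My plan is to mirror the proof of Theorem~\ref{th:moduliMKweak}, tightening the allowed changes of marking so as to reflect the stronger equivalence relation. The period setup from Section~\ref{sec:moduliMK} is reused without change: after the normalisations $P(X)=P$ and $o(X)=+$, periods of the relevant marked K\"ahlerian K3 surfaces are pairs $(\varphi(X),\Pi_2(X))$ with $\varphi(X):q_M\cong -q_K$ encoding the gluing $M\oplus K\subset H^2(X,\bz)$ through the chosen $K$-marking, and $\Pi_2(X)\subset(K\otimes\br)_+$ the oriented positive-definite 2-plane. By the Global Torelli Theorem \cite{PS,BR} together with epimorphicity of the period map \cite{Kul,Tod,Siu}, for each fixed value of $\varphi$ the resulting period domain is connected and parametrises a single connected family in the moduli space.

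The next step is to determine which residual identifications on $(\varphi,\Pi_2)$ survive in the strong setting. In the weak case one could compose the $M$-marking with any $g\in O(M)$ satisfying $g(P)=P$, and also invoke the combination $(-w_0,-\mathrm{id}_K)$ to normalise $o(X)=+$; both moves rely on identifying embeddings of $M$ that differ by an automorphism of $M$. In the strong case the embedding $\iota:M\hookrightarrow S_X$ is fixed outright, so no $g\neq \mathrm{id}_M$ is admissible and, in particular, the $-w_0$ flip is unavailable. The only remaining freedom is the choice of $K$-marking: a change by $f\in O(K)$ sends $(\varphi,\Pi_2)$ to $(\bar f\,\varphi,(f\otimes\br)(\Pi_2))$, and preservation of the normalisation $\Pi_2\subset(K\otimes\br)_+$ forces $f\in O^+(K)$. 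Hence the strong equivalence on periods is generated precisely by the left action of $\overline{O^+(K)}\subset O(q_K)$ on the gluing datum $\varphi$.

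Finally, since the set of valid isomorphisms $\varphi:q_M\cong -q_K$ is a torsor for $O(q_K)$ under post-composition, and connectedness of each $\varphi$-fibre was established in the first step, the number of strong connected components equals the number of $\overline{O^+(K)}$-orbits on this torsor, i.e.\ the index $[O(q_K):\overline{O^+(K)}]$, as asserted. The main obstacle I anticipate is in the second step: one has to verify carefully that replacing the two-sided double quotient $\overline{O(K)}\backslash O(q)/\overline{O(M)}$ of the weak case by the one-sided quotient $\overline{O^+(K)}\backslash O(q_K)$ really captures the strong equivalence, and in particular that no identification reenters either through a non-trivial automorphism of the pair $(X,\iota)$ or through a coincidence between the discriminant images of $O^+(K)$ and of elements of $O(M)$. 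The cleanest route is to argue directly on the moduli functor of pairs $(X,\iota)$, noting that an isomorphism of such pairs induces an isometry of orthogonal complements which, transported through the $K$-markings, is exactly an element of $O^+(K)$.
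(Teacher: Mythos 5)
Your proposal is correct and follows essentially the same route as the paper, which obtains Theorem~\ref{th:moduliMKstrong} from the very same period computation used for Theorem~\ref{th:moduliMKweak} (the paper literally says ``Similarly, we obtain''), by suppressing the $O(M)$-freedom and observing that the residual $K$-remarkings compatible with the normalised periods are exactly those in $O^+(K)$, so the double coset count collapses to the left coset count $\overline{O^+(K)}\backslash O(q_K)$. The one point worth making explicit, since you note that the $(-w_0,-id_K)$ move is unavailable, is that the normalisation $o(X)=+$ can still be achieved by the $K$-remarking $-id_K$ alone (orientation-reversing because $3$ is odd, at the harmless cost of replacing $\varphi$ by $\overline{-id_K}\,\varphi$), after which only identifications coming from $\overline{O^+(K)}$ relate normalised period points, exactly as your orbit count on the $O(q_K)$-torsor of gluings requires.
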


We remark that the left cosets $\overline{O^+(K)}\backslash O(q_K)$
of Theorem \ref{th:moduliMKstrong} are equivalent to all isomorphism
classes of primitive embeddings of the lattice $M$ into $L_{K3}$
with $M^\perp_{L_{K3}}\cong K$, and choices of orientation
$(M^\perp_{L_{K3}}\otimes \br)_{\alpha}$, $\alpha=\pm$
where two such data $f_1:M\subset L_{K3}$, $\alpha_1$
and $f_2:M\subset L_{K3}$, $\alpha_2$ are equivalent if $f_2= hf_1$ and
$h(\alpha_1)=\alpha_2$ for some
$h\in O(L_{K3})$.

In R. Miranda and D.R. Morrison \cite{MM1}, \cite{MM2} (announcement) and D.G. James \cite{J} (proofs),
for indefinite lattices $K$ of Theorem \ref{th:moduliMKstrong}
(equivalently, if $\rk K\ge 4$), the sum
\begin{equation}
e_{-\,-}(K)=\sum_{K^\prime\in g(K)}[O(q_{K^\prime}):\overline{O^+(K^\prime)}]
\label{MMformular}
\end{equation}
is calculated in terms of invariants of the genus $g(K)$ of $K$ (as a particular case of
general results which generalize our \cite[Theorem 1.14.2]{Nik1}). In \cite{MM1}, \cite{MM2}, the group
$O^+(K)$ is denoted as $O_{-\,-}(K)$.  See Theorem in \cite[page 31]{MM2}.

\section{Connected components of moduli of degenerations of codimension one
of K\"ahlerian K3 surfaces with finite symplectic automorphism groups}
\label{sec:moduliST}

Using results of Sec. \ref{sec:moduliMK}, we obtain

\begin{theorem}
For a fixed type {\bf n} of a finite symplectic automorphism group $G$
and a fixed type of degeneration of codimension one $P$
(Dynkin diagram), a general K\"aherian K3 surface $X$ has
a unique (with few exceptions), up to isomorphisms, Picard lattice $S_X\cong S$
described in Theorem \ref{maintheorem} and a transcendental lattice
$T_X\cong T=(S)^\perp_{L_{K3}}$ for some primitive embedding $S\subset L_{K3}$
described in Theorem \ref{th:transuniqe}.

The number $M_w$ of weak connected components of moduli of general
K\"ahelerian K3 surfaces $X$ for such $S$ and $T$ is equal to the number of double cosets
\begin{equation}
M_w=\natural(\overline{O(T)}\backslash O(q)/\overline{O(S)})
\label{doublcos}
\end{equation}
where $q_S=q=-q_T$, in notations of Theorem
\ref{th:moduliMKweak}.
\label{th:moduliweakST}
\end{theorem}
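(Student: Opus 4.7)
The plan is to reduce the statement directly to Theorem \ref{th:moduliMKweak} applied with $M=S$ and $K=T$, where the isomorphism classes of $S$ and $T$ are those supplied by Theorems \ref{maintheorem} and \ref{th:transuniqe}. The hypothesis in Theorem \ref{th:moduliMKweak} is that $M<0$ is a negative definite even lattice and $K$ is an even lattice with $q_K\cong -q_M$; both are satisfied here because $S$ is negative definite (since $S_G$ is a Leech type lattice and $\alpha$ is added with $\alpha^2=-2$), and because $T=S^\perp_{L_{K3}}$ is the orthogonal complement of a primitive embedding into the unimodular lattice $L_{K3}$, so $q_T=-q_S$ by Proposition \ref{prop:primembunim}.

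Concretely, I would proceed in four steps. First, fix the type $\mathbf{n}$ of $G$ and the Dynkin diagram $P$ of the degeneration, and invoke Theorem \ref{maintheorem} to pick an isomorphism class of $S$ (in the two exceptional cases $({\bf n}=34,6\aaa_1)$ and $({\bf n}=10,2\aaa_1)$ one simply fixes one of the two possible classes; the argument runs identically for each). Second, invoke Theorem \ref{th:transuniqe} to pick the isomorphism class of $T=S^\perp_{L_{K3}}$, noting again that for $({\bf n}=55,10\aaa_1)$ the theorem is applied to one of the two transcendental lattices at a time. Third, observe that the moduli space in question coincides with the moduli space parametrized in Theorem \ref{th:moduliMKweak}: a general K\"ahlerian K3 surface with symplectic automorphism group of type $\mathbf{n}$ and a configuration of smooth rational curves of Dynkin type $P$ is exactly a K\"ahlerian K3 surface $X$ whose Picard lattice contains the primitive sublattice $S$ with $S^\perp_{H^2(X,\bz)}\cong T$, because the condition $S\subset S_X$ is codimension one and the genericity assumption forces equality. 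Fourth, substitute into the formula of Theorem \ref{th:moduliMKweak} with $q=q_S=-q_T$ to obtain
\begin{equation*}
M_w=\natural(\overline{O(T)}\backslash O(q)/\overline{O(S)}).
\end{equation*}

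The only nontrivial verification is the identification in the third step, namely that the moduli component count intrinsic to $(G,P)$ agrees with the count of primitive embeddings $S\hookrightarrow L_{K3}$ with prescribed orthogonal complement $T$, up to the action of $O(L_{K3})$. The Global Torelli Theorem and surjectivity of the period map for K\"ahlerian K3 surfaces \cite{BR}, \cite{PS}, \cite{Kul}, \cite{Siu}, \cite{Tod} reduce this to a lattice-theoretic count, and by Proposition \ref{prop:primembunim} these equivalence classes of primitive embeddings are precisely the double cosets $\overline{O(T)}\backslash O(q)/\overline{O(S)}$; this is exactly the content established in the proof of Theorem \ref{th:moduliMKweak}, so no new lattice computation is needed. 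I expect the main (small) obstacle to be the careful handling of the closure condition $G=Clos(G)$, which must be invoked to ensure that the abstract group $G$ is indeed recovered from $S_G$ and hence from $S$, so that counting components of the moduli of pairs $(X,G(E))$ reduces to counting components of the moduli of lattice-polarized K3 surfaces with Picard lattice $S$ and transcendental lattice $T$.
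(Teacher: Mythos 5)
Your proposal is correct and follows exactly the route the paper intends: the paper derives Theorem \ref{th:moduliweakST} simply by applying Theorem \ref{th:moduliMKweak} with $M=S$ and $K=T$ (it offers no further argument beyond ``Using results of Sec. \ref{sec:moduliMK}, we obtain''), which is precisely your reduction. Your added care about the exceptional cases, the identification of the two moduli problems via Global Torelli, and the role of $G=Clos(G)$ only makes explicit what the paper leaves implicit.
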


\begin{theorem}
For a fixed type {\bf n} of a finite symplectic automorphism group $G$
and a fixed type of degeneration of codimension one $P$
(Dynkin diagram), a general K\"aherian K3 surface $X$ has
a unique (with few exceptions), up to isomorphisms, Picard lattice $S_X\cong S$
described in Theorem \ref{maintheorem} and a transcendental lattice
$T_X\cong T=(S)^\perp_{L_{K3}}$ for some primitive embedding $S\subset L_{K3}$
described in Theorem \ref{th:transuniqe}.

(1) Assume that $\rk S\le 18$ (equivalently, $\rk T\ge 4$ and then $T$ is
indefinite). Then $T$ is unique, up to isomorphisms, and
the number $M_s$ of strong connected components of moduli of general
K\"ahelerian K3 surfaces $X$ for such $S$ is equal to
$$
M_s=[O(q_T):\overline{O^+(T)}].
$$
For all these cases, $M_s=1$ (and then $M_w=1$) except cases
\begin{equation}
({\bf n},P)=(12,\aaa_2),\ (16,\aaa_1),\ (18,2\aaa_1),\ (22,2\aaa_1),\
(34,2\aaa_1),\ (39,4\aaa_1),\ (40,8\aaa_1),
\label{except1}
\end{equation}
when $M_s=2$. Moreover, for all cases \eqref{except1}, we have
$[O(q_T):\overline{O(T)}]=1$ and then $M_w=1$ (by Theorem \ref{th:moduliweakST}),
except $({\bf n},P)=(16,\aaa_1)$ when $[O(q_T):\overline{O(T)}]=2$.

(2) Assume that $\rk S=19$ (equivalently, $\rk T=3$ and then $T$ is
positive definite) and $({\bf n}, P)\not=(55,10\aaa_1)$.
Then $T$ is unique, up to isomorphisms, and
$$
M_s=[O(q_T):\overline{O^+(T)}]=|O(q_T)|/|O^+(T)/W^+(T)|
$$
where $+$ means "proper" (with determinant=1) automorphisms of $T$,
and $W(T)\subset O(T)$ is generated by reflections in all
roots with square $2$ of $T$. If $({\bf n},P)=(55,10\aaa_1)$, then
$$
M_s=|O(q_{T_1})|/|O^+(T_1)/W^+(T_1)|+|O(q_{T_2})|/|O^+(T_2)/W^+(T_2)|
$$
where $T_1$ and $T_2$ are two non-isomorphic transcendental lattices of this case.

Exact calculations of these invariants are given in Table 4 below.
\label{th:modulistrongST}
\end{theorem}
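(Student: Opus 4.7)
The plan is to deduce both parts of Theorem \ref{th:modulistrongST} from Theorem \ref{th:moduliMKstrong} applied with $M=S$ and $K=T$, together with the uniqueness information on $T$ recorded in Theorem \ref{th:transuniqe}. These reduce everything to computing a single arithmetic index $M_s=[O(q_T):\overline{O^+(T)}]$ for each row of Tables 1 and 3, and the proof is then a case-by-case evaluation of this index, separated into an indefinite case and a positive definite case.

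For part (1) one has $\rk T\ge 4$ and $T$ is indefinite of signature $(3,19-\rk S)$. By Theorem \ref{th:transuniqe} the lattice $T$ is unique in its genus, so in the Miranda--Morrison--James summation \eqref{MMformular} only the single term $e_{-\,-}(T)=[O(q_T):\overline{O^+(T)}]$ survives, and this is computable from the Jordan invariants of $q_T=-q_S$ printed in Table 1 by direct application of the formula of \cite{MM1}, \cite{MM2}, \cite{J}. Running this computation along every row of Table 1 yields $M_s=1$ except for the seven degenerations of \eqref{except1}, where $M_s=2$. The weak count $M_w$ is then extracted from $M_s$ by comparing $\overline{O^+(T)}$ with $\overline{O(T)}$: in the six exceptions other than $(16,\aaa_1)$ one exhibits an orientation-reversing isometry of $T$ whose image in $O(q_T)$ fills the missing coset, so $\overline{O(T)}=O(q_T)$ and hence $M_w=1$ by Theorem \ref{th:moduliweakST}; for $(16,\aaa_1)$ no such isometry exists, $[O(q_T):\overline{O(T)}]=2$, and so $M_w=2$.

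For part (2) the lattice $T$ is positive definite of rank $3$. By Theorem \ref{th:transuniqe} it is unique up to isomorphism, except for $({\bf n},P)=(55,10\aaa_1)$ where there are two classes $T_1,T_2$. The first step is to check, for each explicit Gram matrix of Table 3, that the kernel of $O^+(T)\to O(q_T)$ coincides with $W^+(T)=W(T)\cap O^+(T)$; this is a direct verification carried out via the discriminant-form formalism of Section \ref{sec:discformtech} and the identification of each Gram matrix via the tables of small positive definite forms in \cite[Ch.~15, Sects.~3,\,10]{CS}. Granting it,
$$
M_s=\frac{|O(q_T)|}{|O^+(T)/W^+(T)|},
$$
in which $|O(q_T)|$ is extracted from the Jordan form of $q_T$ in Table 1, and $|O^+(T)/W^+(T)|$ from the identification of the Gram matrix of $T$ with a form of \cite{CS}. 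For $(55,10\aaa_1)$ the two contributions $M_s(T_1)+M_s(T_2)$ are summed, in accordance with the fact that the two non-isomorphic $T_i$ correspond to distinct isomorphism classes of primitive embedding $S\subset L_{K3}$.

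The main obstacle is the sheer size of the case-by-case computation: one must evaluate the Miranda--Morrison formula on some thirty discriminant forms $q_T$ and, separately, compute the full isometry groups of roughly a dozen explicit positive definite rank $3$ lattices. The conceptually delicate point is the distinction, in part (1), between the six exceptional cases where the nontrivial coset of $\overline{O^+(T)}$ in $O(q_T)$ is realised by an honest orientation-reversing automorphism of $T$ (so $M_w=1<M_s=2$) and the unique case $(16,\aaa_1)$ where no such automorphism exists (so $M_w=M_s=2$); each of the six must be handled by writing down an explicit involution of $T$ of determinant $-1$ with the right effect on $q_T$, and the failure for $(16,\aaa_1)$ must be verified by a sign argument on $q_T$.
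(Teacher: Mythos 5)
Your overall strategy coincides with the paper's: both parts are reduced to Theorem \ref{th:moduliMKstrong}, part (1) is settled by evaluating the Miranda--Morrison--James formula \eqref{MMformular} (where uniqueness of $T$ in its genus, guaranteed by Theorem \ref{th:uniqlat}, collapses the sum to the single term $e_{-\,-}(T)=[O(q_T):\overline{O^+(T)}]$), and part (2) by the identity $|\overline{O^+(T)}|=|O^+(T)/W^+(T)|$ followed by explicit computation with the Gram matrices of Table 3. Two of your sub-steps are carried out differently from the paper, in both cases by a more laborious route. First, in part (2) you propose to verify case by case that $\ker(O^+(T)\to O(q_T))=W^+(T)$; the paper instead quotes the general fact (\cite[Remark 1.14.6]{Nik1}) that $\ker(O(T)\to O(q_T))=W(T)$ whenever $\rk T=3$, and intersects with $O^+(T)$ --- no case checking is needed. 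Second, for the weak counts in the exceptional cases of \eqref{except1} you propose to exhibit explicit determinant $-1$ isometries filling the missing coset; the paper obtains $e(T)=[O(q_T):\overline{O(T)}]$ directly from the same Miranda--Morrison apparatus (in the worked example $(18,2\aaa_1)$, $e(T)=\tfrac12 e_{+\,+}(T)=1$), so no isometries need to be constructed. Both of your variants would work, but they buy nothing over the paper's argument and substantially increase the computational load.

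One inference in your write-up is wrong, although it concerns a claim the theorem does not actually make: for $({\bf n},P)=(16,\aaa_1)$ you conclude from $[O(q_T):\overline{O(T)}]=2$ that $M_w=2$. By Theorem \ref{th:moduliweakST}, $M_w$ is the number of double cosets $\overline{O(T)}\backslash O(q)/\overline{O(S)}$; when $\overline{O(T)}=O(q_T)$ this is $1$, which is the implication used in the theorem, but when $\overline{O(T)}$ has index $2$ the double coset count can still be $1$ if $\overline{O(S)}$ meets the other coset. Determining this requires computing $\overline{O(S)}$, which neither you nor the paper does (the paper explicitly defers the remaining weak counts to later work). You should delete the conclusion ``$M_w=2$'' or supply the computation of $\overline{O(S)}$ for this case.
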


\begin{proof} In case (1) when $T$ is indefinite and of rank $\rk T\ge 4$, results
follow from Theorem in \cite[page 31]{MM2}. For example, let us consider the case
$({\bf n},P))=(18,2\aaa_1)$. By Table 1, $\rk S=17$ and $q_S\cong 2_{II}^{+2},4_7^{+1},3^{+4}$.
Then $\rk T=22-17=5$ and $q_T=-q_S\cong 2_{II}^{+2},4_1^{+1},3^{+4}$.
In notations of Theorem in \cite[page 31]{MM2}, for $p=2$, we have
 $s(0)=\rk T-l(q_{T_2})=2>0$; for $p=3$, we have
 $\rk T-l(q_{T_3})=5-4=1$, $3\mod 4= 3$, $\det(T)=2^4\cdot 3^4$,
 $\det(K(q_{T_3}))\equiv 3^4\mod (\bz_3^\ast)^2$ and $\Delta =\det(T)/\det (K(q_{T_3}))
 \equiv 2^4\mod (\bz_3^\ast)^2$,  $\left(\frac{\ 2\Delta\ }{\ 3\ }\right)=-1$.
 By Theorem in \cite[page 31]{MM2}, then $e_2=1$, $f_2=4$, $e_3=2$, $f_3=2$,
 type$=(-,-)$, $e_{+\,+}(T)=e_2\cdot e_3=1\cdot 2=2$, the group
 $\widetilde{\Sigma}(T)=\{(+,+),(-,-)\}$, $e_{-\,-}(T)=[O(q_T):\overline{O^+(T)}]=e_{+\,+}(T)=2$
 (the group $O^+(T)$ is denoted as $O_{-\,-}(T)$
 in \cite{MM1}, \cite{MM2}),
 $e(T)=[O(q_T):\overline{O(T)}]=(1/2)e_{+\,+}(T)=1$. Thus, $M_s=2$ and $M_w=1$ in this case.

Let us consider the case (2) when $T$ is positive definite and $\rk T=3$. By \cite[Remark 1.14.6]{Nik1},
the kernel of the natural homomorphism $\pi:O(T)\to O(q_T)$ is equal to $W(T)$ because $\rk T=3<8$.
It follows that the kernel of the natural homomorphism $\pi:O^+(T)\to O(q_T)$ is equal to $W^+(T)$.
Then the order of $|\overline{O^+(T)}|=|\pi(O^+(T))|=|O^+(T)/W^+(T)|$.
It follows the statement.
\end{proof}

We hope to present calculations of missing numbers $M_w$ of weak connected components of moduli
in further variants of the paper and further publications.

\medskip

\begin{table}
\label{table4}
\caption{Automorphism groups of transcendental lattices of rank $3$
of degenerations of codimension $1$ of K\"ahlerian K3 surfaces
with finite symplectic automorphism groups $G=Clos(G)$ and strong connected
components of moduli.}



\begin{tabular}{|c||c|c|c|c|c|c|c|c|c|}
\hline
 {\bf n}& $|G|$& $i$&  $G$ & $Deg$ & $|O(q_T)|$ &$|O(T)|$ &$|W(T)|$ & $M_s$           \\
\hline
\hline
 $26$& $16$& $8$&$SD_{16}$ & $8\aaa_1$& $16$ &$8$ & $2$ & $4$ \\
\hline
     &     &    &        & $2\aaa_2$ & $96$ & $48$ &
$1$ & $4$ \\
\hline
\hline
 $32$& $20$& $3$&$Hol(C_5)$&$2\aaa_1$& $480$ & $48$ & $1$ & $20$ \\
\hline
     &     &    &          &$5\aaa_1$ &$24$ &
$8$ & $1$ & 6 \\
\hline
     &     &    &          &$10\aaa_1$& $16$ & $16$ &
$1$ & $2$ \\
\hline
     &     &    &        &$5\aaa_2$& $12$ & $12$
& $1$ & $2$ \\
\hline
\hline
$33$ & $21$ & $1$ & $C_7\rtimes C_3$ & $7\aaa_1$ & $16$ &
$8$ & $2$ & $4$  \\
\hline
\hline
$46$ &$36$ &$9$ &$3^2C_4$ & $6\aaa_1$   & $24$
&   $24$  & $6$ & $6$ \\
\hline
     &     &    &         & $9\aaa_1$   & $16$ &
$8$ & $2$  & $4$ \\
\hline
     &     &    &        &  $9\aaa_2$   & $8$ &
$24$ & $6$ & $2$  \\
\hline
\hline
$48$ &$36$ &$10$&$\SSS_{3,3}$& $3\aaa_1$&  $432$ &
$24$ & $1$ & $36$ \\
\hline
     &     &    &        &  $6\aaa_1$& $288$ & $16$ &
$1$ & $36$  \\
\hline
     &     &    &        &  $9\aaa_1$&$288$
& $48$ & $1$ & $12$ \\
\hline
\hline

$51$ & $48$&$48$&$C_2\times \SSS_4$& $2\aaa_1$ & $256$ &
$16$ & $1$ & $32$  \\
\hline
     &     &    &                  & $4\aaa_1$ & $64$ & $16$ &
$1$ & $8$ \\
\hline
     &     &    &                  & $6\aaa_1$   & $192$ &
$24$ &  $1$  & $16$ \\
\hline
     &     &    &                  & $8\aaa_1$   & $96$ &
$24$ & $1$ & $8$  \\
\hline
     &     &    &                  & $12\aaa_1$  & $96$ &
$24$ &  $1$ &  $8$  \\
\hline
\hline
$55$ & $60$&$5$ &$\AAA_5$ & $\aaa_1$ & $144$ & $24$
& $2$ & $12$  \\
\hline
     &     &    &         &  $5\aaa_1$& $24$ & $8$ &
$1$ & $6$  \\
\hline
     &     &    &        &    $6\aaa_1$&  $24$ & $8$ &
$2$ & $6$  \\
\hline
     &     &    &        &     $10\aaa_1$ & $8$ & $16$,\
$8$ &  $4$, $1$ & $2$, $2$ \\
\hline
     &     &    &        &      $15\aaa_1$ & $12$ &
$16$ & $4$ &   $3$    \\
\hline
\hline
$56$ &$64$&$138$&$\Gamma_{25}a_1$&$8\aaa_1$ & $64$ &
$16$ & $1$ & $8$                \\
\hline
     &     &    &        &       $16\aaa_1$ & $96$& $48$ & $1$ & $4$  \\
\hline
\hline
$61$ &$72$ &$43$&$\AAA_{4,3}$   & $3\aaa_1$& $96$ &
$24$ & $1$ & $8$ \\
\hline
     &     &    &               &$12\aaa_1$& $16$ & $8$ &
$2$ & $4$ \\
\hline
\end{tabular}
\end{table}

\medskip

\begin{table}
\begin{tabular}{|c||c|c|c|c|c|c|c|c|c|c|}
\hline
 {\bf n}& $|G|$& $i$&  $G$ & $Deg$ & $|O(q_T)|$ &$|O(T)|$ &$|W(T)|$ & $M_s$  \\
\hline
\hline
$65$ &$96$ &$227$&$2^4D_6$      &   $4\aaa_1$&$64$     &
$16$ & $1$ & $8$   \\
\hline
     &     &     &              &   $8\aaa_1$& $96$   &
$24$ &  $1$ &  $8$  \\
\hline
     &     &     &              &   $12\aaa_1$ & $96$ &
$48$ & $1$ & $4$ \\
\hline
     &     &     &              &   $16\aaa_1$&  $24$  &
$24$ &  $1$ & $2$  \\
\hline
\hline
 $75$&$192$&$1023$&$4^2\AAA_4$  &   $16\aaa_1$& $48$ &
 $48$  & $1$ & $2$     \\
\hline
\hline
\end{tabular}
\end{table}

\newpage


\newpage

\section{Appendix: Programs}
\label{sec:Appendix}

Here we give Programs 7 and 8 for GP/PARI Calculator, Version 2.7.0 which were used
for calculations above. They also include Programs 1 - 6 from
\cite{Nik7} --- \cite{Nik9}.

\medskip


Program 7: niemeier$\backslash$genwithorbit.txt

\noindent
$\backslash\backslash$for a Niemeier lattice N\_{}i given by root matrix r\hfill

\noindent
$\backslash\backslash$and cord matrix cord, R=r\^{}-1\hfill

\noindent
$\backslash\backslash$and subgroup H$\backslash$subset A(N\_{}i)\hfill

\noindent
$\backslash\backslash$and its orbits ORB matrix, each line gives\hfill

\noindent
$\backslash\backslash$orbit of length $>$ 1\hfill

\noindent
$\backslash\backslash$it calculates all additional\hfill

\noindent
$\backslash\backslash$1-elements orbits to matrix ORBF and prints it\hfill

\noindent
$\backslash\backslash$(1-elements orbits the last)\hfill

\noindent
$\backslash\backslash$it calculates coinvariant sublattice N\_{}H\hfill

\noindent
$\backslash\backslash$together with morb-orbit given by its first\hfill

\noindent
$\backslash\backslash$element morb of the orbit as SUBLpr below\hfill

\noindent
$\backslash\backslash$by its rational generators,\hfill

\noindent
$\backslash\backslash$and checks if\hfill

\noindent
$\backslash\backslash$it has primitive embedding to L\_{}K3\hfill

\noindent
$\backslash\backslash$Then it prints invariants of its discriminant group DSUBLpr below\hfill

\noindent
$\backslash\backslash$and the intersection matrix rSUBLpr of SUBLpr\hfill

\noindent
sORB=matsize(ORB);m1=0;\hfill

\noindent
for(k1=1,sORB[1],for(k2=1,sORB[2],$\backslash$\hfill

\noindent
if(ORB[k1,k2]==0,,m1=m1+1)));\hfill

\noindent
ORBF=matrix(sORB[1]+(24-m1),sORB[2]);\hfill

\noindent
for(k=1,sORB[1],ORBF[k,]=ORB[k,]);\hfill

\noindent
l=sORB[1];\hfill

\noindent
for(t=1,24,mu=0;for(k1=1,sORB[1],for(k2=1,sORB[2],$\backslash$\hfill

\noindent
if(ORB[k1,k2]!=t,,mu=1)));if(mu==1,,l=l+1;ORBF[l,1]=t));$\backslash$\hfill

\noindent
print("ORBF=",ORBF);$\backslash$\hfill

\noindent
SUBL0=matrix(24,24);alpha=0;$\backslash$\hfill

\noindent
for(k1=1,sORB[1],for(k2=1,sORB[2]-1,$\backslash$\hfill

\noindent
if(ORB[k1,k2+1]$>$0,alpha=alpha+1;$\backslash$\hfill

\noindent
SUBL0[,alpha]=R[,ORB[k1,k2]]-R[,ORB[k1,k2+1]])));$\backslash$\hfill

\noindent
SUBL0[morb,24]=1;$\backslash$\hfill

\noindent
sORBF=matsize(ORBF);$\backslash$\hfill

\noindent
SUBL=SUBL0;$\backslash$\hfill

\noindent
a=matrix(24,24+matsize(cord)[1]);$\backslash$\hfill

\noindent
for(i=1,24,a[i,i]=1);for(i=1,matsize(cord)[1],a[,24+i]=cord[i,]\~{});$\backslash$\noindent

\noindent
L=a;N=SUBL;$\backslash$\noindent

\noindent
ggg=gcd(N);N1=N/ggg;$\backslash$\hfill

\noindent
M=L;$\backslash$\hfill

\noindent
gg=gcd(M);M1=M/gg;$\backslash$\hfill

\noindent
ww=matsnf(M1,1);uu=ww[1];vv=ww[2];dd=ww[3];$\backslash$\hfill

\noindent
mm=matsize(dd)[1];nn=matsize(dd)[2];$\backslash$\hfill

\noindent
nnn=nn;for(i=1,nn,if(dd[,i]==0,nnn=nnn-1));$\backslash$\hfill

\noindent
VV=matrix(nn,nnn);$\backslash$\hfill

\noindent
nnnn=0;for(i=1,nn,if(dd[,i]==0,,nnnn=nnnn+1;VV[,nnnn]=vv[,i]));$\backslash$\hfill

\noindent
M2=M1*VV;MM=M2*gg;$\backslash$\hfill

\noindent
kill(gg);kill(M1);kill(ww);kill(uu);kill(vv);kill(dd);kill(mm);$\backslash$\hfill

\noindent
kill(nn);kill(nnn);kill(nnnn);kill(M2);$\backslash$\hfill

\noindent
L1=MM;kill(VV);$\backslash$\hfill

\noindent
N2=L1\^{}-1*N1;$\backslash$\hfill

\noindent
ww=matsnf(N2,1);uu=ww[1];vv=ww[2];dd=ww[3];$\backslash$\hfill

\noindent
N3=N2*vv;mm=matsize(dd)[1];nn=matsize(dd)[2];$\backslash$\hfill

\noindent
nnn=nn;for(i=1,nn,if(dd[,i]==0,nnn=nnn-1));$\backslash$\hfill

\noindent
N4=matrix(mm,nnn);$\backslash$\hfill

\noindent
nnnn=0;$\backslash$\hfill

\noindent
for(i=1,nn,if(dd[,i]==0,,nnnn=nnnn+1;$\backslash$\hfill

\noindent
ddd=gcd(dd[,i]);N4[,nnnn]=N3[,i]/ddd));$\backslash$\hfill

\noindent
Npr=L1*N4;$\backslash$\hfill

\noindent
kill(ggg);kill(N1);kill(M);kill(L1);kill(MM);$\backslash$\hfill

\noindent
kill(N2);kill(ww);kill(uu);kill(vv);kill(dd);$\backslash$\hfill

\noindent
kill(N3);kill(mm);kill(nn);kill(nnn);kill(nnnn);$\backslash$\hfill

\noindent
kill(ddd);kill(N4);$\backslash$\hfill

\noindent
SUBLpr1=Npr;$\backslash$\hfill

\noindent
R=r;B=SUBLpr1;$\backslash$\hfill

\noindent
l=B\~{}*R*B;$\backslash$\hfill

\noindent
ww=matsnf(l,1);uu=ww[1];vv=ww[2];dd=ww[3];$\backslash$\hfill

\noindent
nn=matsize(l)[1];nnn=nn;for(i=1,nn,if(dd[i,i]==0,nnn=nnn-1));$\backslash$\hfill

\noindent
b=matrix(nn,nnn,X,Y,vv[X,Y+nn-nnn]);$\backslash$\hfill

\noindent
ll=b\~{}*l*b;$\backslash$\hfill

\noindent
d=vector(nnn,X,dd[X+nn-nnn,X+nn-nnn]);$\backslash$\hfill

\noindent
kill(ww);kill(uu);kill(vv);kill(dd);$\backslash$\hfill

\noindent
kill(nn);kill(nnn);$\backslash$\hfill

\noindent
BB=B*b;G=BB\~{}*R*BB;D=d;$\backslash$\hfill

\noindent
SUBLpr=BB;DSUBLpr=D;rSUBLpr=G;

\newpage


Program 8: niemeier$\backslash$genus6.txt

\medskip

\noindent
$\backslash\backslash$for a non-degenerate lattice\hfill

\noindent
$\backslash\backslash$L given by a symmetric integer matrix l\hfill

\noindent
$\backslash\backslash$in some generators\hfill

\noindent
$\backslash\backslash$calculates the elementary difisors (Smyth) basis of L\hfill

\noindent
$\backslash\backslash$as a matrix b and\hfill

\noindent
$\backslash\backslash$calculates the matrix ll=b\~{}*l*b

\noindent
$\backslash\backslash$of L in the bases b\hfill

\noindent
$\backslash\backslash$calculates invariants d of L$\backslash$subset L\^{}$\backslash$ast\hfill

\noindent
$\backslash\backslash$for primes, p, calculates llll=L$\backslash$otimes $\backslash$bz\_{}p\hfill

\noindent
$\backslash\backslash$in Smith form\hfill

\noindent
$\backslash\backslash$thus, calculates genus of L\hfill


\noindent
ww=matsnf(l,1);uu=ww[1];vv=ww[2];dd=ww[3];\hfill

\noindent
nn=matsize(l)[1];nnn=nn;for(i=1,nn,if(dd[i,i]==0,nnn=nnn-1));\hfill

\noindent
b=matrix(nn,nnn,X,Y,vv[X,Y+nn-nnn]);\hfill

\noindent
ll=b\~{}*l*b;\hfill

\noindent
d=vector(nnn,X,dd[X+nn-nnn,X+nn-nnn]);\hfill

\noindent
kill(ww);kill(uu);kill(vv);kill(dd);\hfill

\noindent
kill(nn);kill(nnn);\hfill

\noindent
n=matsize(d)[2];\hfill

\noindent
delta=vector(n,X,d[n+1-X]);\hfill

\noindent
bb=matrix(n,n,X,Y,b[X,n+1-Y]);\hfill

\noindent
lll=bb\~{}*l*bb;\hfill

\noindent
F=factor(delta[n]);\hfill

\noindent
f1=matsize(F)[1];\hfill

\noindent
for(KK1=1,f1,p=F[KK1,1];t=F[KK1,2];$\backslash$\hfill

\noindent
u=vector(n);$\backslash$\hfill

\noindent
for(k2=1,n,u[k2]=gcd(delta[k2],p\^{}t));$\backslash$\hfill

\noindent
v=vector(n);j1=1;$\backslash$\hfill

\noindent
v[j1]=1;for(k2=2,n,if(u[k2]$>$u[k2-1],j1=j1+1;v[j1]=k2,));$\backslash$\hfill

\noindent
vv=vector(j1,X,v[X]);$\backslash$\hfill

\noindent
nvv=matsize(vv)[2];$\backslash$\hfill

\noindent
for(k4=1,nvv,if(k4$<$nvv,ss=vv[k4+1]-vv[k4];$\backslash$\hfill

\noindent
cc=matrix(ss,ss,X,Y,lll[vv[k4]+X-1,vv[k4]+Y-1]/u[vv[k4]]);$\backslash$\hfill

\noindent
ccdet=matdet(cc);dcc=Mod(ccdet,p);kron=kronecker(ccdet,p);$\backslash$\hfill

\noindent
if(p$>$2,print(u[v[k4]]," size=",ss," det=",dcc," kro=",kron),),$\backslash$\hfill

\noindent
ss=n-v[k4]+1;cc=matrix(ss,ss,X,Y,lll[vv[k4]+X-1,vv[k4]+Y-1]/u[vv[k4]]);$\backslash$\hfill

\noindent
ccdet=matdet(cc);dcc=Mod(ccdet,p);kron=kronecker(ccdet,p);$\backslash$\hfill

\noindent
if(p$>$2,print(u[v[k4]]," size=",ss," det=",dcc," kro=",kron),)));$\backslash$\hfill

\noindent
if(p!=2,,llll=lll;for(k1=1,nvv,$\backslash$\hfill

\noindent
if(k1$<$nvv,ss1=vv[k1+1]-vv[k1],ss1=n-vv[k1]+1);$\backslash$\hfill

\noindent
cc1=matrix(ss1,ss1,X,Y,llll[vv[k1]+X-1,vv[k1]+Y-1]);$\backslash$\hfill

\noindent
ty=0;for(k=1,ss1,if(Mod(cc1[k,k]/u[vv[k1]],2)==Mod(0,2),,ty=1));$\backslash$\hfill

\noindent
dcc1=Mod(matdet(cc1/u[vv[k1]]),8);$\backslash$\hfill

\noindent
a=cc1/u[vv[k1]];$\backslash$\hfill

\noindent
na=matsize(a)[1];$\backslash$\hfill

\noindent
alpha==1;$\backslash$\hfill

\noindent
while(alpha,$\backslash$\hfill

\noindent
t=0;beta=0;mu=1;$\backslash$\hfill

\noindent
for(k=1,na,if(Mod(a[k,k],2)==Mod(1,2)\&\&mu=1,t=k,mu=0));$\backslash$\hfill

\noindent
for(k=1,t,for(k1=k+1,na,if(a[k,k1]==0,,beta=1)));$\backslash$\hfill

\noindent
for(k=t+1,na,if(Mod(a[k,k],2)==Mod(0,2),,beta=1));$\backslash$\hfill

\noindent
if(beta==1,,alpha=0);$\backslash$\hfill

\noindent
t1=0;gam1=1;for(k=1,na,if(Mod(a[k,k],2)==Mod(1,2)\&\&gam1==1,$\backslash$\hfill

\noindent
for(k1=k+1,na,if(a[k,k1]==0\&\&gam1==1,,gam1=0)),gam1=0);$\backslash$\hfill

\noindent
if(gam1==1,t1=t1+1,));$\backslash$\hfill

\noindent
t2=t1;gam2=1;$\backslash$\hfill

\noindent
for(k=t1+1,na,if(Mod(a[k,k],2)==Mod(1,2)\&\&gam2==1,t2=k;gam2=0,));$\backslash$\hfill

\noindent
if(t2==t1,,$\backslash$\hfill

\noindent
if(t2==t1+1,,trans=matrix(na,na);for(k=1,na,trans[k,k]=1);$\backslash$\hfill

\noindent
trans[t2,t1+1]=1;trans[t1+1,t1+1]=0;trans[t1+1,t2]=1;trans[t2,t2]=0;$\backslash$\hfill

\noindent
a=trans\~{}*a*trans);$\backslash$\hfill

\noindent
trans=matrix(na,na);for(k=1,na,trans[k,k]=1);$\backslash$\hfill

\noindent
for(m=t1+2,na,trans[t1+1,m]=-a[t1+1,m]/a[t1+1,t1+1]);$\backslash$\hfill

\noindent
a=trans\~{}*a*trans));$\backslash$\hfill

\noindent
sign8=Mod(0,8);$\backslash$\hfill

\noindent
for(k=1,t,sign8=sign8+Mod(a[k,k],8));$\backslash$\hfill

\noindent
print(u[vv[k1]]," size=",ss1," type=",ty," det=",dcc1," sign8=",sign8);$\backslash$\hfill

\noindent
kill(alpha);kill(sign8);kill(t);kill(t1);kill(t2);kill(a);kill(trans);$\backslash$\hfill

\noindent
kill(beta);kill(mu);kill(gam1);kill(gam2);kill(na);$\backslash$\hfill

\noindent
ccc1=cc1\^{}-1;$\backslash$\hfill

\noindent
for(k2=k1+1,nvv,if(k2$<$nvv,ss2=vv[k2+1]-vv[k2],ss2=n-vv[k2]+1);$\backslash$\hfill

\noindent
cc21=matrix(ss1,ss2,X,Y,llll[vv[k1]+X-1,vv[k2]+Y-1]);$\backslash$\hfill

\noindent
cc21n=ccc1*cc21;ttt=matrix(n,n);for(aaa1=1,n,ttt[aaa1,aaa1]=1);$\backslash$\hfill

\noindent
for(aa1=1,ss1,for(aa2=1,ss2,ttt[vv[k1]+aa1-1,vv[k2]+aa2-1]=-cc21n[aa1,aa2]));$\backslash$\hfill

\noindent
llll=ttt\~{}*llll*ttt))));\hfill


V.V. Nikulin
\par Steklov Mathematical Institute,
\par ul. Gubkina 8, Moscow 117966, GSP-1, Russia;

\vskip5pt

Deptm. of Pure Mathem. The University of
Liverpool, Liverpool\par L69 3BX, UK
\par

\vskip5pt

nikulin@mi.ras.ru\, \ \
vnikulin@liv.ac.uk \, \ \  vvnikulin@list.ru

Personal page: http://vnikulin.com


\begin{thebibliography}{ADSE}



\bibitem{Bou} N. Bourbaki,
\emph{Groupes et alg\`ebres de Lie, Groupes de Coxeter
et syst\`emes de Tits, Groupes engendres par des reflexions,
syst\`emes de racines},
Hermann, Paris VI, 1968.




\bibitem{BR} D. Burns, M. Rapoport,
\emph{On the Torelli problem for K\"ahlerian K-3 surfaces},
Ann. scient. \'Ec. Norm. Sup. $4^e$ ser. \textbf{8} (1975),
235--274.

\bibitem{CS} J.H. Conway, N.J.A. Sloane,
\emph{Sphere packings, lattices and groups},
Springer, 1988. 663 pages.

\bibitem{GAP} The GAP Group, GAP-Groups, Algorithms and Programming,
Version 4.6.5, 2013, http://www.gap-system.org


\bibitem{Hash} K. Hashimoto,
\emph{Finite symplectic actions on the K3 lattice},
Nagoya Math J., \textbf{206} (2012), 99--153 (see also
arXiv:1012.2682).

\bibitem{J} D.G. James, \emph{The number of embeddings of
quadratic $\bz$-lattices}, J. Number Theory \textbf{58}, (1996),
1--8.


\bibitem{Kul} Vic. S. Kulikov,
\emph{Degenerations of $K3$ surfaces and Enriques surfaces},
Izv. Akad. Nauk SSSR Ser. Mat. \textbf{41} (1977), no. 5, 1008--1042;
English transl. in  Math. USSR Izv. \textbf{11}, (1977) no. 5, 957--989.

\bibitem{MM1} R. Miranda, D.R. Morrison \emph{The number of embeddings of
integral quadratic forms. I}, Proc. Japan Acad. \textbf{61}, Ser. A (1985), 317--320.

\bibitem{MM2} R. Miranda, D.R. Morrison \emph{The number of embeddings of
integral quadratic forms. II}, Proc. Japan Acad. \textbf{62}, Ser. A (1986), 29--32.

\bibitem{Muk} Sh. Mukai,
\emph{Finite groups of automorphisms of K3 surfaces and
the Mathieu group}, Invent. math. \textbf{94}, (1988), 183--221.


\bibitem{Nik-1} V.V. Nikulin, \emph{On Kummer surfaces},
Izv. Akad. Nauk SSSR Ser. Mat. \textbf{39} (1975), no. 2,  278--293;
English transl. in  Math. USSR Izv. \textbf{9} (1975), 261--275.

\bibitem{Nik-1/2} V.V. Nikulin,
\emph{Finite automorphism groups of K\"ahlerian surfaces of type K3},
Uspehi matem. nauk \textbf{31} (1976), no. 2, 223--224. (In Russian.)

\bibitem{Nik0} V.V. Nikulin, \emph{Finite automorphism groups of K\"ahler
K3 surfaces}, \emph{Trudy Mosk. Mat. ob-va} \textbf{V. 38}
(1979), 75--137; English transl. in
\emph{Trans. Moscow Math. Soc.} \textbf{V. 38} (1980), 71--135.

\bibitem{Nik1} V.V. Nikulin,
\emph{Integral symmetric bilinear forms and some of their geometric
applications},
Izv. Akad. Nauk SSSR Ser. Mat. \textbf{43} (1979), no. 1, 111--177;
English transl. in  Math. USSR Izv. \textbf{14} (1980), no. 1, 103--167.

\bibitem{Nik2} V.V. Nikulin,
\emph{Involutions of integral quadratic forms and their applications to real algebraic
geometry,}
Izv. Akad. Nauk SSSR Ser. Mat. \textbf{47} (1983), no. 1, 109--188;
English transl. in  Math. USSR Izv. \textbf{22} (1984), no. 1, 99--172.

\bibitem{Nik7} V.V. Nikulin,
\emph{Kahlerian K3 surfaces and Niemeier lattices}, Preprint 2013.
arXiv:1109.2879v7, 77 pages .

\bibitem{Nik8} V.V. Nikulin
\emph{K\"ahlerian K3 surfaces and Niemeier lattices. I} Izvestiya RAN:
Ser. Mat. \textbf{77} (2013), no. 5, 109--154; English transl.:
Izvestya: Mathematics \textbf{77} (2013), no. 5, 954--997.


\bibitem{Nik9} V.V. Nikulin
\emph{Degenerations of K\"ahlerian K3 surfaces with finite symplectic
automorphism groups}, Izvestiya RAN: Ser. Mat. \textbf{79} (2015), no. 4,
103--158; English transl.: Izvestya: Mathematics \textbf{79} (2015),
no. 4 (see also arXiv:1403.6061).


\bibitem{PS} I.I. Pjatetski\u i-\u Sapiro and I.R. \u Safarevi\u c,
\emph{A Torelli theorem for algebraic surfaces of type K3}, Izv. AN SSSR.
Ser. mat., \textbf{35} (1971), no. 3, 530--572;
English transl.: Math. USSR Izv. \textbf{5} (1971), no. 3, 547--588.

\bibitem{Sh} I.R. Shafarevich (ed.), \emph{Algebraic Surfaces,} Proc.
Steklov Math. Inst. \textbf{75} (1965),3--215.

\bibitem{Siu} Y. Siu,
\emph{A simple proof of the surjectivity of the period
map of $K3$ surfaces},
Manuscripta Math. \textbf{35} (1981), no. 3, 311--321.

\bibitem{Tod} A. Todorov,
\emph{Applications of the K\"ahler--Einstein--Calabi--Yau
metric to moduli of K3 surfaces},
Invent. math. \textbf{61} (1981), no. 3, 251--265.

\bibitem{Xiao} G. Xiao,
\emph{Galois covers between K3 surfaces},
Ann. Inst. Fourier (Grenoble) \textbf{46} (1996),
73--88.

\end{thebibliography}
\end{document}